\documentclass[12pt, reqno]{amsart}

\usepackage[margin=1in]{geometry} 

\usepackage{amssymb,upref}
\usepackage{amsmath, amsthm}
\usepackage{enumerate, graphicx, float}
\usepackage{relsize, mathrsfs, upgreek, bbm, eufrak}

\usepackage{color}
\usepackage{esint}

\theoremstyle{plain}

\newtheorem{thm}{Theorem}[section]
\newtheorem{lem}[thm]{Lemma}
\newtheorem{coro}[thm]{Corollary}

\theoremstyle{remark}

\newtheorem{remark}[thm]{Remark}
\numberwithin{equation}{section}

\def\XXint#1#2#3{{\setbox0=\hbox{$#1{#2#3}{\int}$ }
\vcenter{\hbox{$#2#3$ }}\kern-.6\wd0}}

\newcommand{\bee}{\begin{equation}}
\newcommand{\eee}{\end{equation}}
\newcommand{\be}{\begin{equation*}}
\newcommand{\ee}{\end{equation*}}

\newcommand{\eps}{\varepsilon}

\newcommand{\na}{\mathbb{N}}
\newcommand{\re}{\mathbb{R}}
\newcommand{\rn}{\mathbb{R}^n}

\newcommand{\norm}[2]{\|#1\|_{#2}}

\newcommand{\diam}[1]{{\rm diam}(#1)}
\newcommand{\supp}[1]{{\rm supp}(#1)}
\newcommand{\dist}[1]{{\rm d}(#1)}

\newcommand{\dx}{\, dx}

\newcommand{\Ainf}{A_\infty(\rn)}

\newcommand{\Muinf}{{\rm{Mu}_\infty}}

\newcommand{\Mu}{{\rm{Mu}}}

\DeclareMathOperator*{\esssup}{ess\,sup\,}

\DeclareMathOperator*{\essinf}{ess\,inf\,}

\makeatletter
\@namedef{subjclassname@2020}{\textup{2020} Mathematics Subject Classification}
\makeatother

\begin{document}

\subjclass[2020]{Primary 42B25; Secondary 42B37.}

\keywords{Muckenhoupt weights, fractional operators, Sobolev inequalities, porous sets.}

\address{Diego Maldonado, Kansas State University, Department of Mathematics. 138 Cardwell Hall, Manhattan, KS-66506, USA.} \email{dmaldona@ksu.edu}

\address{Javier Soria, Departamento de An\'alisis Matem\'atico y Matem\'atica Aplicada, Fa\-cul\-tad de Matem\'aticas, Universidad Complutense de Madrid, Plaza de Ciencias 3, 28040 Madrid, Spain and ICMAT.}
\email{javier.soria@ucm.es}

\title[]{Weighted estimates for fractional integrals with Distances to Bounded Median Porous Sets and applications to Hardy--Sobolev Inequalities}
\author[Diego Maldonado and Javier Soria]{Diego Maldonado$^{*}$ and Javier Soria$^{**}$}

\thanks{$^{*}$Partially supported by Simons Foundation grant MPS-TSM-00007229.\,$^{**}$Partially supported by grants PID2024-155917NB-I00, and CEX-2023-001347-S, funded by MCIN/AEI/ 10.13039/501100011033, and Grupo UCM-970966.}

\date{\today}

\begin{abstract} Weighted estimates for the fractional integral operator $I_\alpha$ are established and subsequently applied to derive corresponding Hardy--Sobolev inequalities. The weights are constructed from distance functions to bounded median porous sets and possess mixed homogeneity, which enables us to extend earlier results obtained for porous sets to a significantly broader class of geometries.
\end{abstract}

\maketitle

\section{Introduction and main results}\label{sec:main:results}

For $1 < p < n$ the classical Hardy or weighted Hardy-Sobolev inequality establishes the existence of $C_{p,n} >0$ such that 
\begin{equation}\label{Hardy:ineq:p:p}
\int_{\rn}  |f(x)|^p |x|^{-p} \, dx \leq C_{p,n} \int_{\rn} |\nabla f(x)|^p \, dx, \quad \forall f \in C_0^1(\rn),
\end{equation}
where $C_0^1(\rn)$ denotes the class of continuously differentiable functions vanishing at $\infty$. More generally, for $1 < p \leq q \leq np/(n-p) < \infty$, H. Egnell \cite[Lemma~7]{Eg1} (see also V. G. Maz'ja's \cite[Section~1.3.1]{Maz}) proved the inequality 
\begin{equation}\label{EM:ineq:p:q}
\left(\int_{\rn}  |f(x)|^q |x|^{\frac{q}{p}(n-p) -n} \, dx \right)^{1/q}\leq C_{n,p,q} \left(\int_{\rn} |\nabla f(x)|^p \, dx\right)^{1/p}, \quad \forall f \in C_0^1(\rn).
\end{equation}
If $q=p$ then \eqref{EM:ineq:p:q} reduces to the Hardy inequality \eqref{Hardy:ineq:p:p}. Given a set $E \subset \rn$, the distance function to $E$ is defined for $x \in \rn$ as $\dist{x, E}:=\inf\{|x-y|: y \in E\}.$ When $|x| = \dist{x,\{0\}}$ is replaced with the distance to a (nonempty) closed set $E \subset \rn$, it has been proved by J.~Lehrb\"ack and A.~V\"ah\"akangas in \cite[Theorem~1.1]{LeVa} that given $1 \leq p \leq q < np/(n-p) < \infty$, the Hardy-type inequality
\begin{equation}\label{Sob:porous}
\left(\int_{\rn}  |f(x)|^q \, \dist{x,E}^{\frac{q}{p}(n-p) -n} \, dx \right)^{1/q} \leq C \left( \int_{\rn} |\nabla f(x)|^p \, dx \right)^{1/p},\quad \forall f \in C_0^1(\rn),
\end{equation}
holds true if and only if $\dim_A(E) < q(n-p)/p$ (here $\dim_A(E)$ denotes the  Assouad dimension of $E$, see Section \ref{sec:Assouad}). 

Back to powers of $|x|$, extensions of \eqref{EM:ineq:p:q} include the classical Caffarelli-Kohn-Nirenberg inequalities proved in \cite{CKN1}: given $1 < p \leq q \leq np/(n-p) < \infty$ and $\beta > -(n-p)$ there exists $C=C(p, q, \beta, n) >0$ such that
\begin{equation}\label{CKN:ineq:2:q}
\left(\int_{\rn}  |f(x)|^q |x|^{q(n-p+\beta)/p -n} \, dx \right)^{1/q} \leq C \left(\int_{\rn} |\nabla f(x)|^p |x|^{\beta}\, dx \right)^{1/p}, \quad \forall f \in C_0^1(\rn).
\end{equation}
Now, by considering $\dist{x, E}$ instead of $|x|$ in \eqref{CKN:ineq:2:q} for a closed set $E \subset \rn$, $1 \leq p, q < \infty$, and $\beta \geq 0$ with $q(n-p+\beta)/p \neq n$, from \cite[Theorem 6.1]{LeVa}  the inequality 
\begin{equation}\label{Hardy:ineq:p:q:P}
\left(\int_{\rn} |f(x)|^q \,  \dist{x,E}^{q(n-p+\beta)/p -n} \dx \right)^{1/q}\leq C  \left(\int_{\rn} |\nabla f(x)|^p \, \dist{x,E}^{\beta} \dx \right)^{1/p},
\end{equation}
for some constant $C > 0$ and every $f \in C_0^1(\rn)$, implies $\dim_A(E) < q(n - p + \beta)/p.$  In particular, if $q(n - p + \beta)/p < n$, then $\dim_A(E)  < n$.

Let us mention that in  \cite[Theorem 4.1]{DIL+}, B. Dyda, L. Ihnatsyeva, J. Lehrb\"ack, H. Tuominen, and A. V\"ah\"akangas proved that for $1 < p \leq q \leq pn/(n-p) < \infty$ and $\beta \in \re$, the condition
\begin{equation}\label{DIL+:sufficient}
\dim_A(E) <  \min\{q(n - p + \beta)/p, n - \beta/(p-1)\}
\end{equation}
is sufficient for \eqref{Hardy:ineq:p:q:P} to hold. By setting $\underline{{\rm{codim}}}_A(E):= n -{\rm{dim}}_A(E)$, the inequality \eqref{DIL+:sufficient} can be recast as 
\begin{equation}\label{DIL+:sufficient:co}
\underline{{\rm{codim}}}_A(E) > \min\{n -q(n - p + \beta)/p, \beta/(p-1)\}.
\end{equation}
A set $E \subset \rn$ with $\dim_A(E)  < n$ (that is, $\underline{{\rm{codim}}}_A(E) >0$) is called a \emph{porous set} (see Section~\ref{secc:porous:sets} for equivalent definitions). The class of porous sets includes that of $\lambda$-regular sets for $0 < \lambda < n$, where a closed set $E \subset \rn$ is called \emph{$\lambda$-regular} if there exists $C \geq 1$ such that
$$
C^{-1}r^\lambda \leq \mathcal{H}^\lambda(E\cap B(x, r)) \leq C r^\lambda, \quad \forall x \in E, 0 < r < \diam{E},
$$
where $\diam{E}:=\sup\{|x-y|: x, y \in E\}$  indicates the diameter of $E$ and $\mathcal{H}^\lambda$ stands for the $\lambda$-dimensional Hausdorff measure. If $E$ is $\lambda$-regular, ${\rm{dim}}_A(E) = \lambda$ (see for instance \cite[Theorem~10.21]{KLV}) which makes $E$ porous when $\lambda < n$.

The fact that the porosity of $E$ is a necessary condition for \eqref{Sob:porous} and \eqref{Hardy:ineq:p:q:P} naturally leads one to ask whether corresponding inequalities exist in the case of nonporous sets. To answer this, let us recall the notions of weak and median porosity  introduced in \cite{ALMV} and \cite{Pa-UT}  (see Section~\ref{secc:distance:weight:porosity} for details). Following T.~Anderson, J.~Lehrb\"ack,  C.~Mudarra, and A.~V\"ah\"akangas in \cite{ALMV} a nonempty set $E \subset \rn$  is  \emph{weakly porous} if there exists $\alpha > 0$ such that $\dist{\cdot, E}^{-\alpha}$ belongs to the Muckenhoupt class $A_1(\rn)$ (the $A_p(\rn)$ classes are reviewed in Section \ref{sec:Ap:weights}). In turn, M.~Pasquariello and I.~Uriarte--Tuero in \cite{Pa-UT} and, independently,  I.~G\'omez-Vargas in \cite{IGV26} characterized the sets $E \subset \rn$ for which there exists $\alpha > 0$ with $\dist{\cdot, E}^{-\alpha} \in A_p(\rn)$ for some $1 \leq p < \infty$. In \cite{Pa-UT}, such sets are called \emph{median porous sets}. As it turns out, 
$$
{\text{porosity }} \Rightarrow {\text{weak porosity }} \Rightarrow {\text{median porosity}}
$$
with the converses to the above implications being false in general.

As further described in Section \ref{secc:distance:weight:porosity}, the properties of weak and median porosity for $E$ are quantified by a family of nonnegative indices $\{{\rm{Mu}}_p(E)\}_{1 \leq p \leq \infty}$ (with $\rm{Mu}$ after Muckenhoupt) which come to refine the notion of the codimension $\underline{{\rm{codim}}}_A$ in the sense that $E$ is weakly porous if and only if ${\rm{Mu}}_1 (E) >0$ (with ${\rm{Mu}}_1 (E) = \underline{{\rm{codim}}}_A(E)$ whenever $E$ is porous); whereas $E$ is median porous  if and only if ${\rm{Mu}}_\infty (E) >0$ (with ${\rm{Mu}}_p (E) = \underline{{\rm{codim}}}_A(E)$ for every $1 \leq p \leq \infty$ whenever $E$ is weakly porous). For example, in \cite[Section~7]{ALMV} it is proved that for $n \in \na$ and $\gamma > 0$, the bounded set
$$
E:= \bigcup\limits_{j \in \na} \partial B(0, j^{-\gamma}) \bigcup \{0\}  \subset \rn
$$
is weakly porous, but not porous, and it satisfies
$$
\dim_A(E) = n, \quad  \overline{\rm{dim}_M}(E)  = \max\left\{n-1, \frac{n}{1+\gamma}\right\},  \quad \Mu_1  =  n - \overline{\rm{dim}_M}(E).
$$
Here $\overline{\rm{dim}_M}(E)$ stands for the upper Minkowski dimension of $E \subset \rn$ (see Section \ref{secc:dim:M}). In addition, in \cite[Section~9]{Pa-UT}, the authors showed that for $0 < \gamma < 1$, the subset of the real line
$
E_\gamma :=\{ \pm m^\gamma: m \in \na\}
$
is a median porous set which is not weakly porous.

In the context of the weighted Hardy inequality \eqref{Hardy:ineq:p:q:P}, the first result to ``break the porosity barrier'' (that is, allowing for $\underline{{\rm{codim}}}_A(E) =0$) comes from \cite[Theorem 10.8(a)]{Pa-UT} where for $n \geq 2$, $1 < p < n$, and $q=p^*:=np/(n-p)$, the conditions 
\begin{equation}\label{Pa-UT:Mu}
{\rm{Mu}}_\infty (E) > n - q(n - p + \beta)/p \quad \text{ and } \quad -{\rm{Mu}}_p(E) < \beta < 0 
\end{equation}
are shown to imply \eqref{Hardy:ineq:p:q:P}; thus, providing a sufficient condition, counterpart to the sufficient condition \eqref{DIL+:sufficient:co}, within the median porous case.

For $1 < p \leq q < \infty$ and $0 < \alpha < n$ let us define
\begin{equation}\label{def:alpha0}
\Theta_{p,q}(\alpha):= \alpha + n \left( \frac{1}{q} - \frac{1}{p}\right).
\end{equation}
The cases $\Theta_{p,q}(\alpha) =0$ (i.e., $q = np/(n-\alpha p)$) and $\Theta_{p,q}(\alpha) >0$ (i.e., $q < np/(n-\alpha p)$) are referred to as the \emph{critical} and \emph{subcritical} cases, respectively, for the indices $p, q, \alpha$. The conditions from \eqref{Pa-UT:Mu} correspond then to the critical case with $\alpha =1$. Given a median porous set $E$, the weights $u_{\gamma_1, \gamma_2}$ that we will consider are defined for $x \in \rn \setminus E$ as
\begin{equation}\label{def:u:gamma:1:2:c}
u_{\gamma_1, \gamma_2}(x) :=   \left\{
      \begin{array}{cl}
       \max\left\{ \left(\frac{\dist{x, E}}{\diam{E}}\right)^{\gamma_1}, \left(\frac{\dist{x, E}}{\diam{E}}\right)^{\gamma_2} \right\}    &  \text{if } \gamma_1 < \gamma_2 \\
	\min \left\{ \left(\frac{\dist{x, E}}{\diam{E}}\right)^{\gamma_1}, \left(\frac{\dist{x, E}}{\diam{E}}\right)^{\gamma_2} \right\}    &  \text{if } \gamma_1 \geq \gamma_2,
\end{array}
    \right. 
\end{equation}
for suitable choices of exponents $\gamma_1, \gamma_2 \in \re$. Since $u_{\gamma_1, \gamma_2}$ can be rewritten as
\begin{equation}\label{def:u:F}
u_{\gamma_1, \gamma_2}(x) :=   \left\{
      \begin{array}{cl}
       \left(\frac{\dist{x, E}}{\diam{E}}\right)^{\gamma_1}  &  \text{if } \dist{x, E} < \diam{E} \\
	\left(\frac{\dist{x, E}}{\diam{E}}\right)^{\gamma_2}  &  \text{if } \dist{x, E} \geq \diam{E},
\end{array}
    \right. 
\end{equation}
the exponents $\gamma_1, \gamma_2$ equip $u_{\gamma_1, \gamma_2}$ with a ``mixed'' homogeneity;  namely, $\gamma_1$ in the local case $\dist{\cdot, E} < \diam{E}$ and $\gamma_2$ when $\dist{\cdot, E} \geq \diam{E}$. This mixed homogeneity allows for a number of Hardy-Sobolev inequalities, including counterparts to \eqref{Sob:porous} and \eqref{Hardy:ineq:p:q:P}, in both the critical and subcritical cases in the absence of porosity  (see Remark \ref{rmk:mixed:homogeneity} below).

Given a nonempty set $E \subset \rn$, let
$$
I_\infty(E):=\{\beta \in \re: \dist{\cdot, E}^{-\beta} \in \Ainf\},
$$ 
hence, $E$ is median porous if and only if  $I_\infty(E) \ne \emptyset$. Also, for $0 < \alpha < n$ the fractional integral operator $I_\alpha$ is defined as
$$
I_\alpha(f)(x):= \int_{\rn} \frac{f(y)}{|x-y|^{n-\alpha}}\, dy. 
$$
We are now in position to state our main results on weighted estimates for fractional integrals based on distance functions to bounded, median porous sets.

\begin{thm}\label{thm:indices:SW} Fix $1 < p \leq q < \infty$ and $0 < \alpha < n$ with $\Theta_{p,q}(\alpha)> 0$ (i.e., the subcritical case). Let $E \subset \rn$ with $0 < \diam{E} < \infty$ be a median porous set. Then, for any exponents $\gamma_1, \gamma_2, \delta_1, \delta_2 \in \re$ verifying 
\begin{equation}\label{cond:gamma:1:2}
- \gamma_j \in I_\infty(E) \quad \text{for } j=1,2,
\end{equation}
\begin{equation}\label{cond:delta:1:2}
\delta_j p'/p \in I_\infty(E) \quad \text{for } j=1,2,
\end{equation}
\begin{equation}\label{cond:s:dimB:n}
\overline{\rm{dim}_M}(E) - n < \frac{n}{\Theta_{p,q}(\alpha)}\left(\frac{\gamma_1}{q} - \frac{\delta_1}{p}\right)  < 0,
\end{equation}
and 
\begin{equation}\label{cond:gamma2:delta2}
\left(\frac{\delta_2}{p}-\frac{\gamma_2}{q} \right)  = \Theta_{p,q}(\alpha),
\end{equation}
there exists $C >0$, depending only on $\gamma_1, \gamma_2, \delta_1, \delta_2$, $p$, $q$, $\alpha$, $n$, and $E$,  such that  
\begin{equation}\label{bound:T:alpha:SW} 
\left(\int_{\rn} |I_\alpha(f)(x)|^q  u_{\gamma_1, \gamma_2}(x) \dx \right)^{1/q}\leq C \left(\int_{\rn} |f(x)|^p \, u_{\delta_1, \delta_2}(x) \dx \right)^{1/p},
\end{equation}
for every measurable $f$.
\end{thm}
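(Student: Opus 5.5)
The natural tool is the two-weight, two-power-weight-type criterion of Sawyer--Wheeden for fractional integrals in the subcritical range $p<q$ (or $p\le q$ with a power bump). For weights $u$ (target) and $v$ (source), with $\sigma:=v^{1-p'}$, the Sawyer--Wheeden theorem gives \eqref{bound:T:alpha:SW} as soon as $u$ and $\sigma$ are doubling/reverse-doubling (e.g.\ belong to $\Ainf$) and the one-cube condition
\begin{equation*}
\sup_{Q} |Q|^{\frac{\alpha}{n}+\frac1q-\frac1p}\left(\frac{1}{|Q|}\int_Q u\right)^{1/q}\left(\frac{1}{|Q|}\int_Q \sigma\right)^{1/p'}<\infty
\end{equation*}
holds. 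So I would take $u=u_{\gamma_1,\gamma_2}$ and $v=u_{\delta_1,\delta_2}$, for which $\sigma=u_{-\delta_1p'/p,\,-\delta_2p'/p}$ since $1-p'=-p'/p$. The first step is to show $u,\sigma\in\Ainf$. Conditions \eqref{cond:gamma:1:2} and \eqref{cond:delta:1:2} say precisely that the pure powers $\dist{\cdot,E}^{\gamma_j}$ and $\dist{\cdot,E}^{-\delta_jp'/p}$ are in $\Ainf$. I would then prove a gluing lemma: if $E$ is bounded and both $\dist{\cdot,E}^{a}$ and $\dist{\cdot,E}^{b}$ lie in $\Ainf$, then the mixed weight $u_{a,b}$ is in $\Ainf$. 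Its proof would split cubes into those of side $\lesssim\diam{E}$, where $u_{a,b}$ is comparable to one pure power on a neighbourhood of $E$ or is essentially constant far away, and large cubes, on which $\dist{x,E}\approx |x-x_E|$ outside a bounded set. In the latter case $u_{a,b}$ behaves like $|x|^{b}$, which is in $\Ainf$ for $b>-n$; this follows from membership of $\dist{\cdot,E}^{b}$ in $\Ainf$.

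The second and main step is to verify the cube condition, with explicit averages. Let $R=\diam{E}$ and $\ell=\ell(Q)$. First consider cubes with $\dist{Q,E}\gtrsim \ell$. Both weights are essentially constant on such a cube, equal to $u(x_Q)$ and $\sigma(x_Q)$, and the quantity becomes
\begin{equation*}
\ell^{\Theta_{p,q}(\alpha)}\left(\tfrac{d}{R}\right)^{\gamma/q-\delta/p}, \qquad d=\dist{x_Q,E}\gtrsim \ell .
\end{equation*}
In the far regime $d\ge R$, condition \eqref{cond:gamma2:delta2} turns this into $(\ell/d)^{\Theta}R^{\Theta}$, which is bounded. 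In the near regime $d<R$, the exponent $\gamma_1/q-\delta_1/p<0$ from \eqref{cond:s:dimB:n} together with $\ell\lesssim d$ gives a bound by $\ell^{\Theta}(\ell/R)^{\gamma_1/q-\delta_1/p}$. This is bounded for $\ell\le R$ precisely when $\Theta+\gamma_1/q-\delta_1/p\ge0$, and the right inequality in \eqref{cond:s:dimB:n} together with the left one yields this, since $\overline{\dim_M}(E)-n\ge -n$.

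Next consider cubes meeting a neighbourhood of $E$ with $\ell\le R$. Here I need estimates of the form
\begin{equation*}
\frac{1}{|Q|}\int_Q \dist{x,E}^{s}\,dx\lesssim \ell^{s}\quad (s\ge0),\qquad
\frac{1}{|Q|}\int_Q \dist{x,E}^{-t}\,dx\lesssim \ell^{-t}\left(\frac{\ell}{R}\right)^{\cdots}\quad (t>0),
\end{equation*}
and this is where the Minkowski dimension enters. By the Minkowski content bound $|\{x\in Q:\dist{x,E}<r\}|\lesssim r^{n-s}R^{s}$ for $s>\overline{\dim_M}(E)$, negative powers are integrable and controlled as long as $t<n-\overline{\dim_M}(E)$. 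The left inequality in \eqref{cond:s:dimB:n}, rescaled by $n/\Theta$, is exactly what makes the product of the two averages, times $\ell^{\Theta}$, bounded uniformly in $\ell\le R$. Heuristically, we worst-case the sign distribution between the $u$ and $\sigma$ averages by using Hölder to combine them into a single power $\dist{\cdot,E}^{n(\gamma_1/q-\delta_1/p)/\Theta}$ integrated against $|Q|$.

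Finally, for large cubes $\ell> R$ containing $E$, the weights behave like $|x-x_E|^{\gamma_2}$ and $|x-x_E|^{-\delta_2p'/p}$ plus a bounded contribution from the ball of radius $R$. The condition \eqref{cond:gamma2:delta2} then gives homogeneity zero, and local integrability is guaranteed by the $\Ainf$ hypotheses.

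I expect the main obstacle to be this near-$E$ regime for small cubes. Controlling averages of negative powers of $\dist{\cdot,E}$ on cubes touching $E$ uses only upper Minkowski dimension, not Assouad dimension. Making the bound uniform requires a careful combination of the $\Ainf$ reverse Hölder property (to trade $u$ and $\sigma$ averages into a single power) with the global Minkowski content estimate, since local Assouad-type control is unavailable for nonporous $E$.
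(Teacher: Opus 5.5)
Your plan is sound, but it takes a more hands-on route than the paper.

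\textbf{How the paper argues.} The paper never verifies a cube condition. It applies Theorem~\ref{thm:SW:Ainf:p:q}, whose only hypothesis besides $u,v^{-p'/p}\in\Ainf$ is the global condition $u^{1/q}/v^{1/p}\in L^{n/\Theta,\infty}(\rn)$, where $\Theta:=\Theta_{p,q}(\alpha)$. Define $s$ by $\frac{n}{\Theta}(\frac{\gamma_1}{q}-\frac{\delta_1}{p})=-(n-s)$. Then \eqref{cond:s:dimB:n} becomes $\overline{\rm{dim}_M}(E)<s<n$, and \eqref{cond:gamma2:delta2} makes the far exponent $-n$. So $(u^{1/q}/v^{1/p})^{n/\Theta}$ is exactly $g_E$, which lies in $L^{1,\infty}(\rn)$ by Lemmas~\ref{lemma:s>dimB(F)} and \ref{lemma:d:x:F:L1w}. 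All scales are handled by one distribution-function computation.

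\textbf{Two simplifications for your version.} Your gluing lemma is unnecessary. The weight $u_{a,b}$ is literally the pointwise max or min of $(\dist{\cdot,E}/\diam{E})^{a}$ and $(\dist{\cdot,E}/\diam{E})^{b}$, so it lies in $\Ainf$ by the lattice property of the $A_p$ classes. Also, $p=q$ is allowed here, so you must use the power-bump version of the cube criterion (Pérez). It is $\Ainf\subset\bigcup_{r>1}RH_r(\rn)$ that lets you drop the bump, and this is exactly what Theorem~\ref{thm:SW:Ainf:p:q} packages.

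\textbf{The combination step.} The one step you should state correctly is how you combine the $u$- and $\sigma$-averages on small cubes meeting $E$. This step is genuinely needed: bounding the two averages separately is lossy. For instance, if $\delta_1<0$, the bound $\fint_Q\sigma\lesssim\ell^{-\delta_1p'/p}$ ignores that $\sigma$ is small exactly where $u$ is large. It leads to a requirement strictly stronger than \eqref{cond:s:dimB:n}.

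Hölder's inequality goes the wrong way here. What works is the multiplicative form of the $\Ainf$ condition, $\fint_Q w\le[w]_{\Ainf}\exp(\fint_Q\log w)$. With $h:=u^{1/q}\sigma^{1/p'}=u^{1/q}v^{-1/p}$ and any $0<t<n/\Theta$,
\[
|Q|^{\frac{\Theta}{n}}\Big(\fint_Q u\Big)^{\frac1q}\Big(\fint_Q \sigma\Big)^{\frac1{p'}}\le C\,|Q|^{\frac{\Theta}{n}}\exp\Big(\fint_Q\log h\Big)\le C\,|Q|^{\frac{\Theta}{n}}\Big(\fint_Q h^t\Big)^{\frac1t}\le C_t\,\|h\|_{L^{n/\Theta,\infty}(\rn)}.
\]
The last step is Kolmogorov's inequality. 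Once this is written down, your near, far and large-cube case analysis collapses to the single fact $h\in L^{n/\Theta,\infty}(\rn)$, which is the paper's $g_E\in L^{1,\infty}(\rn)$.

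Your version makes transparent which hypothesis controls which scale. The paper's version is shorter, because the Minkowski-dimension input enters only once, globally.
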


\begin{remark}\label{rmk:Hardy:beyond:porosity} Regarding the feasibility of the condition \eqref{cond:s:dimB:n}, by Lemma \ref{lemma:Muinf<n-dimM} below we have $ \Muinf(E) \leq n - \overline{{\rm{dim}}}_M(E)$ for every nonempty bounded set $E \subset \rn$. If, in addition, $E$ is median porous (i.e., $\Muinf(E) >0$) it follows that $\overline{{\rm{dim}}}_M(E) < n$ and therefore indices $p, q, \delta_1, \gamma_1$ satisfying \eqref{cond:s:dimB:n} can always be chosen. 
\end{remark}

The case $\alpha =1$ in Theorem \ref{thm:indices:SW} yields the following two-weight Hardy-Sobolev inequality. 

\begin{coro}\label{coro:Hardy:CKN:MP} Fix $n > 1$ and $1 < p \leq q < \infty$ with 
$$
\Theta_{p,q}(1):= 1 + n (1/q-1/p)> 0,
$$
that is, the subcritical case $q < np/(n-p).$  Let $E \subset \rn$ with $0 < \diam{E} < \infty$ be a median porous set. Then, given exponents $\gamma_1, \gamma_2, \delta_1, \delta_2 \in \re$ satisfying
 \begin{align*}
      -\gamma_j, \delta_j p'/p & \in I_\infty(E) \quad \text{for } j=1,2,\\
      \overline{\rm{dim}_M}(E) -n  < \frac{n}{\Theta_{p,q}(1)}\left(\frac{\gamma_1}{q} - \frac{\delta_1}{p}\right) &  < 0, \\
	\left(\frac{\delta_2}{p}-\frac{\gamma_2}{q} \right) & = \Theta_{p,q}(1),    
\end{align*}
we have that $u_{\gamma_1, \gamma_2}, u_{\delta_1, \delta_2}^{-p'/p} \in \Ainf$ and that there exists $C >0$, depending only on $p$, $q$, $n$, $\gamma_1, \gamma_2, \delta_1, \delta_2$, and $E$ such that
$$
\left(\int_{\rn} |f(x)|^q  \, u_{\gamma_1, \gamma_2}(x) \dx \right)^{1/q}\leq C \left(\int_{\rn} |\nabla f(x)|^p \, u_{\delta_1, \delta_2}(x) \dx \right)^{1/p},   \quad \forall f \in C_0^1(\rn).
$$ 
\end{coro}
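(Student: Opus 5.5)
The corollary should follow from Theorem \ref{thm:indices:SW} with $\alpha=1$, combined with the standard pointwise bound
\be
|f(x)| \leq c_n\, I_1(|\nabla f|)(x), \qquad f \in C_0^1(\rn),\ x\in\rn .
\ee
This bound comes from the representation $f(x) = c_n\int_{\rn} \frac{\nabla f(y)\cdot (x-y)}{|x-y|^n}\, dy$. That representation is valid for $C^1$ functions vanishing at infinity: integrate $\frac{d}{dt} f(x+t\omega)$ along rays from $x$ to infinity and average over $\omega\in S^{n-1}$.

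\textbf{Step 1: the inequality.} Assume $\nabla f$ is such that the right-hand side is finite; otherwise there is nothing to prove. The hypotheses of the corollary are exactly those of Theorem \ref{thm:indices:SW} with $\alpha=1$. Note that $0<1<n$ because $n>1$. Applying \eqref{bound:T:alpha:SW} to $|\nabla f|$ gives
\be
\Big(\int_{\rn}|f|^q u_{\gamma_1,\gamma_2}\Big)^{1/q} \le c_n\Big(\int_{\rn}|I_1(|\nabla f|)|^q u_{\gamma_1,\gamma_2}\Big)^{1/q}\le C\Big(\int_{\rn}|\nabla f|^p u_{\delta_1,\delta_2}\Big)^{1/p}.
\ee
Here $u$ is defined off $E$. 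If $|E|>0$ the weight would be problematic, but median porous sets have measure zero, since $\dist{\cdot,E}^{-\beta}\in\Ainf$ must be finite a.e.

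\textbf{Step 2: the $\Ainf$ memberships.} First, $u_{\gamma_1,\gamma_2} = \max$ or $\min$ of $c_j\,\dist{\cdot,E}^{\gamma_j}$, where $\dist{\cdot,E}^{\gamma_j}=\dist{\cdot,E}^{-(-\gamma_j)}\in\Ainf$ by \eqref{cond:gamma:1:2}. Second, $u_{\delta_1,\delta_2}^{-p'/p}$ is the $\min$ or $\max$ (the order flips) of $c_j\,\dist{\cdot,E}^{-\delta_jp'/p}$, and each of these lies in $\Ainf$ by \eqref{cond:delta:1:2}. So it suffices to show that the max and the min of two $\Ainf$ weights are again in $\Ainf$.

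\textbf{The main obstacle.} The only non-routine point is this closure of $\Ainf$ under max and min. I expect it to be handled by a lemma in the paper, presumably the one already used inside the proof of Theorem \ref{thm:indices:SW}. If I had to prove it myself, I would use two standard facts:
\begin{itemize}
\item[(i)] $w_1, w_2 \in A_r$ for some common $r$ (possible since $\Ainf=\bigcup_r A_r$ and the $A_r$ classes increase in $r$).
\item[(ii)] $\max\{w_1,w_2\}\approx w_1+w_2$, and $A_r$ is closed under sums.
\end{itemize}
For the minimum, take $\min\{w_1,w_2\} \approx (w_1^{-1}+w_2^{-1})^{-1}$ and use the $A_1$-factorization or the Jones factorization: write $w_i = a_i b_i^{1-r}$ with $a_i,b_i\in A_1$. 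Alternatively, use the characterization of $\Ainf$ through the reverse Jensen inequality $\fint_Q w \le C\exp(\fint_Q \log w)$, which seems cleaner for the min. Concretely, I would check the max and the min via the condition $w\in\Ainf \iff w^{\epsilon}\in A_2$-type stability together with closure under sums.

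Alternatively, one can use the mixed structure directly. The weights here are concrete: powers of the distance on $\{\dist{\cdot,E}<\diam E\}$ and powers on its complement. On that complement, for bounded $E$, $\dist{x,E}\approx |x|$ at large scales, so these membership statements could also be verified by hand.
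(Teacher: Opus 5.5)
Your proposal is correct and follows the paper's argument: the corollary is Theorem~\ref{thm:indices:SW} with $\alpha=1$ applied to $|\nabla f|$, combined with the pointwise bound \eqref{f:I1:grad:f}. The $\Ainf$ memberships come from the lattice property of the classes $A_r$ recalled in Section~\ref{sec:Ap:weights}, which the paper cites rather than proves, together with $\Ainf=\bigcup_r A_r$. If you want a self-contained argument for the part you left vague, the minimum case reduces to the maximum case by duality: $\min\{w_1,w_2\}^{1-r'}=\max\{w_1^{1-r'},w_2^{1-r'}\}$ and $w\in A_r \iff w^{1-r'}\in A_{r'}$, and the maximum is handled by your observation that $A_{r'}$ is closed under sums.
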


The case  $\delta_1= \delta_2  =0$ in Corollary \ref{coro:Hardy:CKN:MP} provides a counterpart to \eqref{Sob:porous} when $E$ is bounded and median porous. More precisely,

\begin{coro}\label{coro:v=1:p<q<p*} Fix $1 < p  < n$ and $p\leq q < np/(n-p)$. Let $E \subset \rn$ be a median porous set with $0 < \diam{E} < \infty$. Then, for any exponents $\gamma_1, \gamma_2$ satisfying 
\begin{align}\label{cond:gamma:1:2:delta=0:a}
0 > \gamma_1 >& \left(\frac{q}{p}(n-p) -n \right)\left(1- \frac{\overline{\rm{dim}_M}(E)}{n} \right),\\\label{cond:gamma:1:2:delta=0:b}
0 >  \gamma_2 =& \frac{q}{p}(n-p) -n > - \Muinf(E),
\end{align}
there exists  $C > 0$ depending only on $\gamma_1, \gamma_2$ , $p$, $q$, $n$, and $E$ such that
\begin{equation}\label{I1:v=1:p<q<p*} 
\left(\int_{\rn} |I_1(f)(x)|^q  \min \left\{\dist{x, E}^{\gamma_1}, \dist{x, E}^{\gamma_2} \right\} \dx \right)^{1/q}\leq C \left(\int_{\rn} |f(x)|^p \dx \right)^{1/p},
\end{equation}
for every $f \in L^p(\rn)$. Consequently, the weighted Hardy-Sobolev inequality
\begin{equation}\label{Hardy:p:q:E:q<p*}
\left(\int_{\rn} |f(x)|^q  \min \left\{\dist{x, E}^{\gamma_1}, \dist{x, E}^{\gamma_2} \right\} \dx \right)^{1/q}\leq C \left(\int_{\rn} |\nabla f(x)|^p \dx \right)^{1/p}
\end{equation}
holds true for every $f \in C_0^1(\rn)$.
\end{coro}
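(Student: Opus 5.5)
The plan is to obtain Corollary~\ref{coro:v=1:p<q<p*} as the special case $\alpha=1$, $\delta_1=\delta_2=0$ of Theorem~\ref{thm:indices:SW}. The Hardy--Sobolev inequality \eqref{Hardy:p:q:E:q<p*} will then follow from \eqref{I1:v=1:p<q<p*} through the pointwise bound $|f(x)|\leq c_n I_1(|\nabla f|)(x)$, valid for $f\in C_0^1(\rn)$.

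First, with $\delta_1=\delta_2=0$ the right-hand weight $u_{0,0}$ is identically $1$. Condition \eqref{cond:delta:1:2} then reads $0\in I_\infty(E)$, which holds trivially since $\dist{\cdot,E}^0=1\in\Ainf$. Next we compute $\Theta_{p,q}(1)=1+n/q-n/p$. Condition \eqref{cond:gamma2:delta2} forces
\[
\gamma_2=-q\,\Theta_{p,q}(1)=\frac{q}{p}(n-p)-n ,
\]
which is exactly the equality in \eqref{cond:gamma:1:2:delta=0:b}. Since $q<np/(n-p)$, we have $\Theta_{p,q}(1)>0$ and hence $\gamma_2<0$.

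Condition \eqref{cond:s:dimB:n} with $\delta_1=0$ becomes
\[
\overline{\rm{dim}_M}(E)-n<\frac{n\gamma_1}{q\Theta_{p,q}(1)}<0 .
\]
Using $q\Theta_{p,q}(1)=n-\frac{q}{p}(n-p)$, this is equivalent to
\[
0>\gamma_1>\Big(\tfrac{q}{p}(n-p)-n\Big)\Big(1-\tfrac{\overline{\rm{dim}_M}(E)}{n}\Big),
\]
which is \eqref{cond:gamma:1:2:delta=0:a}.

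It remains to verify \eqref{cond:gamma:1:2}, namely $-\gamma_j\in I_\infty(E)$, i.e. $\dist{\cdot,E}^{\gamma_j}\in\Ainf$. This is the one point requiring input about $I_\infty(E)$. I expect to use the description of the indices from Section~\ref{secc:distance:weight:porosity}: $\dist{\cdot,E}^{-\beta}\in\Ainf$ for all $\beta<\Muinf(E)$, and in particular $\dist{\cdot,E}^{\gamma}\in\Ainf$ for every $\gamma\le 0$ with $-\gamma<\Muinf(E)$. Negative powers $\gamma<0$ make the weight large near $E$, and the relevant threshold is $\Muinf(E)$.

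For $\gamma_2$ this is exactly the hypothesis $\gamma_2>-\Muinf(E)$. For $\gamma_1$, Lemma~\ref{lemma:Muinf<n-dimM} gives $\Muinf(E)\le n-\overline{\rm{dim}_M}(E)$. Then
\[
|\gamma_1|<|\gamma_2|\Big(1-\tfrac{\overline{\rm{dim}_M}(E)}{n}\Big)<|\gamma_2|<\Muinf(E),
\]
so $-\gamma_1<\Muinf(E)$ as well. The main obstacle is confirming that $I_\infty(E)$ is an interval containing $[0,\Muinf(E))$. If the paper defines $\Muinf(E)$ as $\sup I_\infty(E)$, this requires the fact that $\Ainf$ weights $w$ have $w^s\in\Ainf$ for $0<s\le1$, which I would cite or prove via Jensen/reverse Hölder.

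With all hypotheses verified, Theorem~\ref{thm:indices:SW} yields \eqref{bound:T:alpha:SW} with $u_{0,0}=1$. Since $\gamma_1,\gamma_2<0$, the weight $u_{\gamma_1,\gamma_2}$ is the min of the normalized powers. It agrees with $\min\{\dist{x,E}^{\gamma_1},\dist{x,E}^{\gamma_2}\}$ up to a constant factor depending on $\diam{E}$ and $\gamma_j$, which gives \eqref{I1:v=1:p<q<p*}.

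Finally, for $f\in C_0^1(\rn)$ the representation $|f(x)|\le c_n\,I_1(|\nabla f|)(x)$ follows by integrating along rays and averaging over directions. Applying \eqref{I1:v=1:p<q<p*} to $|\nabla f|$ gives \eqref{Hardy:p:q:E:q<p*}. If $\nabla f\notin L^p$, the right-hand side is infinite and the inequality holds trivially.
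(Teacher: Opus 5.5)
Your proposal is correct and follows the paper's own proof: you specialize Theorem~\ref{thm:indices:SW} to $\alpha=1$, $\delta_1=\delta_2=0$, check that its hypotheses reduce to \eqref{cond:gamma:1:2:delta=0:a}--\eqref{cond:gamma:1:2:delta=0:b}, and finish with the pointwise bound \eqref{f:I1:grad:f}. You also spell out two details the paper leaves implicit, namely that $-\gamma_1,-\gamma_2\in(0,\Muinf(E))\subset I_\infty(E)$ and that the $\diam{E}$-normalization only changes constants.

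Two small fixes are needed. First, the middle inequality $|\gamma_2|\bigl(1-\overline{\rm{dim}_M}(E)/n\bigr)<|\gamma_2|$ should be $\le$, since equality holds when $\overline{\rm{dim}_M}(E)=0$ (e.g.\ $E$ finite); this is harmless because your first inequality is strict. Second, $u_{\gamma_1,\gamma_2}$ is the minimum because $\gamma_1>\gamma_2$, not merely because both exponents are negative.
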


\begin{remark}\label{rmk:mixed:homogeneity} The Hardy-type inequality \eqref{Hardy:p:q:E:q<p*}, with its mixed $(\gamma_1, \gamma_2)$-homogeneity, comes then as a substitute for \eqref{Sob:porous} in the absence of porosity. Indeed, for $\gamma_1, \gamma_2$ as in \eqref{cond:gamma:1:2:delta=0:a} and \eqref{cond:gamma:1:2:delta=0:b} we have $\gamma_2 < \gamma_1 < 0$ and the inequality $\dist{x, E} \leq 1$ is equivalent to $\dist{x, E}^{\gamma_1} \leq \dist{x, E}^{\gamma_2}$. Hence if $\supp{f}\subset \{x \in \rn: \dist{x, E} > 1\}$, then \eqref{Hardy:p:q:E:q<p*} reduces to \eqref{Sob:porous} (since $\gamma_2$ in \eqref{cond:gamma:1:2:delta=0:b} equals the corresponding exponent in 
\eqref{Sob:porous}), whereas if $\supp{f}\subset \{x \in \rn: \dist{x, E} \leq 1\}$, then \eqref{Hardy:p:q:E:q<p*} means 
$$
\left(\int_{\rn} |f(x)|^q \dist{x, E}^{\gamma_1} \dx \right)^{1/q}\leq C \left(\int_{\rn} |\nabla f(x)|^p \dx \right)^{1/p},
$$
with $\gamma_1 =  \left(1- \frac{\overline{\rm{dim}_M}(E)}{n}\right) \gamma_2.$
\end{remark}

From the case $p=q$ and $\alpha =1$ (hence, $\Theta_{p,q}(1)=1$) in Theorem \ref{thm:indices:SW} we obtain

\begin{thm}\label{thm:gamma:1:2:delta:1:2:p=q} Fix $n > 1$ and $1 < p < \infty$. Let $E \subset \rn$ be a median porous set with $0 < \diam{E} < \infty$. Then, for any exponents $\gamma_1, \gamma_2, \delta_1, \delta_2$ satisfying 
\begin{equation}\label{cond:gamma:1:2:delta:1:2:p=q}
\left\{
      \begin{array}{rl}
     -\gamma_j, \delta_j p'/p &\in I_\infty(E) \quad \text{for } j=1,2\\[2 mm]
      \overline{\rm{dim}_M}(E) - n & < \frac{n}{p} ( \gamma_1 - \delta_1) <0  \\[2 mm]
	\gamma_2 - \delta_2 & = -p,
\end{array}
    \right. 
\end{equation} 
we have $u_{\gamma_1, \gamma_2}, u_{\delta_1, \delta_2}^{-p'/p} \in \Ainf$ and there exists $C >0$, depending only on $\gamma_1, \gamma_2, \delta_1, \delta_2$, $p$, $n$, and $E$,  such that  
\begin{equation}\label{gamma:1:2:delta:1:2:p=q}
\int_{\rn} |I_1(f)(x)|^p  u_{\gamma_1,\gamma_2}(x) \dx \leq C \int_{\rn} |f(x)|^p v_{\delta_1,\delta_2}(x) \dx,
\end{equation}
for every measurable function $f$. Consequently,  
\begin{equation}\label{HS:Theta1=1}
\int_{\rn} |f(x)|^p  u_{\gamma_1,\gamma_2}(x) \dx \leq C \int_{\rn} |\nabla f(x)|^p v_{\delta_1,\delta_2}(x) \dx, \quad \forall f \in C_0^1(\rn).
\end{equation}
\end{thm}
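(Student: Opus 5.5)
The plan is to read Theorem~\ref{thm:gamma:1:2:delta:1:2:p=q} as the special case $q=p$, $\alpha=1$ of Theorem~\ref{thm:indices:SW}. Here $v_{\delta_1,\delta_2}$ should be read as $u_{\delta_1,\delta_2}$, the weight defined in \eqref{def:u:gamma:1:2:c}. The inequality for $f$ itself will then follow from the standard pointwise representation $|f(x)|\le c_n I_1(|\nabla f|)(x)$, valid for $f\in C_0^1(\rn)$.

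First I check that the hypotheses match. With $q=p$ and $\alpha=1$, \eqref{def:alpha0} gives $\Theta_{p,p}(1)=1>0$, so we are in the subcritical case. Conditions \eqref{cond:gamma:1:2} and \eqref{cond:delta:1:2} are exactly the first line of \eqref{cond:gamma:1:2:delta:1:2:p=q}. Condition \eqref{cond:s:dimB:n} becomes
\[
\overline{\rm{dim}_M}(E)-n<n\left(\frac{\gamma_1}{p}-\frac{\delta_1}{p}\right)=\frac{n}{p}(\gamma_1-\delta_1)<0,
\]
which is the second line. Condition \eqref{cond:gamma2:delta2} reads $(\delta_2-\gamma_2)/p=1$, that is, $\gamma_2-\delta_2=-p$, the third line. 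Theorem~\ref{thm:indices:SW} therefore yields \eqref{gamma:1:2:delta:1:2:p=q} directly, with a constant depending only on the listed parameters.

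Second, I prove the $\Ainf$ memberships. On $\{\dist{x,E}<\diam{E}\}$ the weight $u_{\gamma_1,\gamma_2}$ equals $c\,\dist{x,E}^{\gamma_1}$, and elsewhere it equals $c\,\dist{x,E}^{\gamma_2}$; both powers lie in $\Ainf$ because $-\gamma_j\in I_\infty(E)$. I will show that the max or min of two such $\Ainf$ weights, when they are comparable across the level set $\{\dist{\cdot,E}=\diam{E}\}$, is again in $\Ainf$.

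My first attempt uses that $\Ainf$ is closed under $\min\{w_1,w_2\}$ and $\max\{w_1,w_2\}$. For the max this holds since $\max\{w_1,w_2\}\approx w_1+w_2$, and sums of $\Ainf$ weights are $\Ainf$ via the $A_p$ characterization with a common $p$. For the min, write $\min\{w_1,w_2\}=(w_1^{-1}+w_2^{-1})^{-1}$ up to constants, and use that for $w\in A_p$ one has $w^{-1/(p-1)}\in A_{p'}$, so the reciprocal-sum argument closes within $A_p$ classes. If this is problematic, I will fall back on the explicit structure. Since $E$ is bounded, for $\dist{x,E}\ge\diam{E}$ we have $\dist{x,E}\approx|x-x_0|$ for a fixed $x_0\in E$, and $\Ainf$ can be verified on small and large balls separately.

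The same argument applied to $u_{\delta_1,\delta_2}^{-p'/p}=u_{-\delta_1p'/p,\,-\delta_2p'/p}$, with $\delta_jp'/p\in I_\infty(E)$, gives that weight in $\Ainf$. This is the step I expect to require the most care, although the paper's earlier sections presumably already establish it in order to prove Theorem~\ref{thm:indices:SW}, so it may simply be quoted.

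Finally, \eqref{HS:Theta1=1} follows from $|f|\le c_nI_1(|\nabla f|)$ together with \eqref{gamma:1:2:delta:1:2:p=q} applied to $|\nabla f|$.
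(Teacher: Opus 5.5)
Your proposal is correct and is essentially the paper's proof: specializing Theorem \ref{thm:indices:SW} to $q=p$, $\alpha=1$ (so $\Theta_{p,p}(1)=1$) turns \eqref{cond:gamma:1:2}--\eqref{cond:gamma2:delta2} into \eqref{cond:gamma:1:2:delta:1:2:p=q}, and \eqref{HS:Theta1=1} then follows from \eqref{f:I1:grad:f}. The paper obtains the $\Ainf$ memberships as you suggest, from the piecewise form \eqref{def:u:F} and the lattice property of $\Ainf$, which it cites from \cite{KKM}; if you re-derive the min case yourself, use $\min\{w_1,w_2\}\approx\bigl(w_1^{-1/(p-1)}+w_2^{-1/(p-1)}\bigr)^{-(p-1)}$ with $A_p$--$A_{p'}$ duality rather than $(w_1^{-1}+w_2^{-1})^{-1}$.
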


The next theorem establishes two-weight inequalities for $I_\alpha$ in the critical case $\Theta_{p,q}(\alpha)=0$, that is, $q=np/(n- \alpha p)$.

\begin{thm}\label{thm:indices:SW:q=p*} Fix $0 < \alpha < n$, $1 < p < n/\alpha$, and set $q:=np/(n- \alpha p)$. Let $E \subset \rn$ be a median porous set with $0 < \diam{E} < \infty$. Then, for any exponents $\gamma_1, \gamma_2, \delta_1, \delta_2 \in \re$ verifying \eqref{cond:gamma:1:2}, \eqref{cond:delta:1:2}, and
\begin{align}\label{gamma1:delta1:q=p*}
\gamma_1 & \geq \frac{q }{p}\delta_1, \\\label{gamma2:delta2:q=p*}
\gamma_2 & \leq \frac{q }{p}\delta_2,
\end{align}
we have $u_{\gamma_1, \gamma_2}, u_{\delta_1, \delta_2}^{-p'/p} \in \Ainf$ and there exists $C >0$, depending only on $\gamma_1, \gamma_2, \delta_1, \delta_2$, $p$, $\alpha$, $n$, and $E$,  such that  
\begin{equation}\label{bound:T:alpha:SW:2} 
\left(\int_{\rn} |I_\alpha(f)(x)|^q  u_{\gamma_1, \gamma_2}(x) \dx \right)^{1/q}\leq C \left(\int_{\rn} |f(x)|^p \, u_{\delta_1, \delta_2}(x) \dx \right)^{1/p},
\end{equation}
for every measurable $f$. 
\end{thm}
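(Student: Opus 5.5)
In the critical case $q=np/(n-\alpha p)$ the natural tool is the two-weight theorem of Pérez (in the form used by Sawyer--Wheeden) for $A_\infty$-type weights. Write $u=u_{\gamma_1,\gamma_2}$ and $v=u_{\delta_1,\delta_2}$; recall $\sigma=v^{-p'/p}$. I will verify the ``power bump'' or rather the $A_\infty$ Sawyer--Wheeden testing condition
\[
\sup_{Q}\,|Q|^{\frac{\alpha}{n}+\frac1q-\frac1p}\Big(\fint_Q u\Big)^{1/q}\Big(\fint_Q \sigma\Big)^{1/p'}<\infty .
\]
For $u,\sigma\in\Ainf$, this condition suffices for \eqref{bound:T:alpha:SW:2} (Pérez's result on fractional integrals with $A_\infty$ weights, which requires no bump when both weights are $A_\infty$). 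Here $\frac{\alpha}{n}+\frac1q-\frac1p=0$, so the task is the uniform bound $\big(\fint_Q u\big)^{1/q}\big(\fint_Q\sigma\big)^{1/p'}\le C$ over all cubes $Q$.

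\textbf{Step 1: $u,\sigma\in\Ainf$.} By \eqref{cond:gamma:1:2} and \eqref{cond:delta:1:2}, $\dist{\cdot,E}^{\gamma_j}$ and $\dist{\cdot,E}^{-\delta_jp'/p}$ lie in $\Ainf$. Note $\sigma=u_{-\delta_1p'/p,\,-\delta_2p'/p}$, since the max/min convention flips consistently. A min or max of two $A_\infty$ power weights of $\dist{\cdot,E}$ with the switch at $\dist{\cdot,E}=\diam{E}$ should again be in $\Ainf$. I would argue via the lemma that $\max/\min$ of $A_\infty$ weights is $A_\infty$. Alternatively, I would use the scaling picture: cubes far from $E$ (of size comparable to their distance) see a single exponent, and cubes meeting the region $\dist{\cdot,E}<\diam{E}$ at small scale see only $\gamma_1$. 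This is presumably what the earlier sections provide for $u_{\gamma_1,\gamma_2}$, and I would cite it.

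\textbf{Step 2: the testing condition, split by cube geometry.} Let $D=\diam{E}$ and let $\ell(Q)$ denote the side length.

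\emph{Case (a): $\dist{Q,E}\gtrsim \ell(Q)$.} Here $\dist{\cdot,E}\approx t$ is roughly constant on $Q$. Both averages are then comparable to the weights at a point, and the product is $\approx (t/D)^{\gamma/q-\delta/p}$ with $(\gamma,\delta)=(\gamma_1,\delta_1)$ if $t\lesssim D$ and $(\gamma_2,\delta_2)$ if $t\gtrsim D$. Condition \eqref{gamma1:delta1:q=p*} makes the exponent $\ge0$ with $t/D\le C$. Condition \eqref{gamma2:delta2:q=p*} makes it $\le0$ with $t/D\ge c$. Hence the product is bounded; the transition region is handled by comparability.

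\emph{Case (b): $\ell(Q)\lesssim D$ and $Q$ close to $E$.} I use the reverse Jensen/$A_\infty$ comparison of averages with the values of the power weight at the scale of $Q$. Since $\dist{\cdot,E}^{\gamma_1}\in\Ainf$, $\fint_Q \dist{\cdot,E}^{\gamma_1}\approx$ its average over the enlarged cube, which is controlled by $\ell(Q)^{\gamma_1}$ up to constants. The intended estimate is $\fint_Q \dist{\cdot,E}^{\gamma}\lesssim \ell(Q)^{\gamma}$ when $-\gamma\in I_\infty(E)$; this should follow from the Muckenhoupt index facts in Section~\ref{secc:distance:weight:porosity}. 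The product is then $\lesssim (\ell(Q)/D)^{\gamma_1/q-\delta_1/p}\le C$ by \eqref{gamma1:delta1:q=p*}.

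\emph{Case (c): $\ell(Q)\gtrsim D$ and $Q$ meets a neighborhood of $E$.} The weight is dominated by the far-field part $\dist{\cdot,E}^{\gamma_2}\approx (|x-x_E|)^{\gamma_2}$ away from a ball of radius $\sim D$. The averages become averages of radial power weights; these are $\Ainf$ precisely because $-\gamma_2,\ \delta_2p'/p\in I_\infty(E)$, which forces integrability near the set. So $\fint_Q u\approx (\ell(Q)/D)^{\gamma_2}$ and similarly for $\sigma$, and \eqref{gamma2:delta2:q=p*} gives boundedness.

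\textbf{Main obstacle.} The crux is case (b), specifically the upper bound $\fint_Q\dist{\cdot,E}^{\gamma}\lesssim \ell(Q)^{\gamma}$ for cubes touching $E$ with negative $\gamma$. Mere membership in $\Ainf$ does not give this directly when $E$ is not porous; for instance, $\dist{\cdot,E}$ can be tiny on most of $Q$. I would first try the characterization of $\Ainf$ for $\dist{\cdot,E}^{-\beta}$ from Pasquariello--Uriarte-Tuero, which should give exactly this comparability of averages with $\ell(Q)^{-\beta}$ up to uniform constants. Failing that, I would obtain the needed inequality on each cube from the $A_p$ condition of $\sigma$ paired with $u$. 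If only the product is controllable, that is enough: via the $A_p$-type inequality for $\dist{\cdot,E}^{-\beta}$ one bounds $\big(\fint_Q u\big)^{1/q}\big(\fint_Q\sigma\big)^{1/p'}$ by $\ell(Q)^{\gamma_1/q-\delta_1/p}$ times an $A_{r}$ constant. This is the step I would check most carefully.
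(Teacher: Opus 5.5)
Your reduction is sound. Step 1 is exactly the paper's lattice argument, so $u=u_{\gamma_1,\gamma_2}$ and $\sigma=u_{\delta_1,\delta_2}^{-p'/p}$ lie in $\Ainf$. Since $q>p$, reverse H\"older absorbs the power bumps in the Sawyer--Wheeden condition, so it suffices to bound $(\fint_Q u)^{1/q}(\fint_Q\sigma)^{1/p'}$ uniformly over cubes. Cases (a) and (c) are fine.

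\textbf{The gap is case (b).} The estimate $\fint_Q \dist{\cdot,E}^{\gamma}\lesssim \ell(Q)^{\gamma}$ for $\gamma<0$ and small cubes touching $E$ is false for nonporous median porous sets. The Pasquariello--Uriarte-Tuero characterization does not supply it. Take the weakly porous set $E=\bigcup_{j\in\na}\partial B(0,j^{-\tau})\cup\{0\}$ from the introduction, for which $\dist{\cdot,E}^{-\beta}\in A_1(\rn)$ when $0<\beta<{\rm{Mu}}_1(E)$. If $(j+1)^{-\tau}\le|x|\le j^{-\tau}$, then $\dist{x,E}\le \tfrac{\tau}{2}j^{-\tau-1}\le C_\tau|x|^{1+1/\tau}$. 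Hence on a cube $Q$ centered at $0$ with small side $\ell$ you get $\fint_Q\dist{\cdot,E}^{-\beta}\ge c\,\ell^{-\beta(1+1/\tau)}$, which is not $O(\ell^{-\beta})$. Condition \eqref{gamma1:delta1:q=p*} allows one of $u,\sigma$ to be a negative power of $\dist{\cdot,E}$ near $E$ (e.g.\ $\gamma_1<0$). So bounding the two averages separately by powers of $\ell(Q)$ cannot work. Your fallback via an unspecified $A_r$ pairing is exactly the step that is not carried out, and it is the only place where the lack of porosity matters.

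\textbf{The missing idea} is to apply the exponential form of $A_\infty$ to both weights and multiply. Use $\fint_Q w\le[w]_{\Ainf}\exp(\fint_Q\ln w)$ and $\sigma^{1/p'}=v^{-1/p}$ with $v=u_{\delta_1,\delta_2}$:
\[
\Big(\fint_Q u\Big)^{1/q}\Big(\fint_Q \sigma\Big)^{1/p'}\le [u]_{\Ainf}^{1/q}[\sigma]_{\Ainf}^{1/p'}\exp\Big(\fint_Q \ln\big(u^{1/q}v^{-1/p}\big)\Big)\le [u]_{\Ainf}^{1/q}[\sigma]_{\Ainf}^{1/p'}\,\sup_{\rn} u^{1/q}v^{-1/p}.
\]
By \eqref{def:u:F}, $u^{1/q}v^{-1/p}$ equals $(\dist{x,E}/\diam{E})^{\gamma_1/q-\delta_1/p}$ where $\dist{x,E}<\diam{E}$, and this is $\le 1$ by \eqref{gamma1:delta1:q=p*}. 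Elsewhere it equals $(\dist{x,E}/\diam{E})^{\gamma_2/q-\delta_2/p}$, which is $\le 1$ by \eqref{gamma2:delta2:q=p*}. This handles all cubes at once, with no geometric case analysis.

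It also recovers the bound you hoped for, but only for the product. For case (b) cubes contained in $\{\dist{\cdot,E}<\diam{E}\}$, the product is $\lesssim(\ell(Q)/\diam{E})^{\gamma_1/q-\delta_1/p}$, because $\fint_Q\ln\dist{\cdot,E}\le\ln\sup_Q\dist{\cdot,E}$ and the exponent is nonnegative. The individual factors need not satisfy such bounds.

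The pointwise inequality $u^{1/q}\le v^{1/p}$ is exactly what the paper checks. It then invokes Theorem \ref{thm:SW:Ainf:p:q:alpha0=0} directly.
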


As a particular case of Theorem~\ref{thm:indices:SW:q=p*} we recover \cite[Theorem~10.4]{Pa-UT} in the case of bounded sets. Namely, 
 
\begin{coro}\label{coro:beta:1:2:critical} Fix $0 \leq \alpha < n$, $1 < p < n/\alpha$, and set $q=:np/(n- \alpha p)$. Let $E \subset \rn$ be a median porous set with $0 < \diam{E} < \infty$. Consider exponents $\beta_1, \beta_2 \in \re$ satisfying
\begin{equation}\label{coro:beta:1:2:a}
- \beta_2 = - \frac{q}{p} \beta_1\in I_\infty(E)
\end{equation}
and
\begin{equation}\label{coro:beta:2:a}
\quad \frac{\beta_1}{p-1} \in I_\infty(E).
\end{equation} 
Then, there exists $C > 0$ depending only on $\beta_1, \beta_2,$ $\alpha$, $p$, $n$,  and $E$, such that
\begin{equation*}
\left(\int_{\rn} |I_\alpha(f)(x)|^q \dist{x, E}^{\beta_2} \dx \right)^{1/q} \leq C \left(\int_{\rn} |f(x)|^p  \dist{x, E}^{\beta_1} \dx \right)^{1/p},
\end{equation*}
for every measurable function $f$. 
\end{coro}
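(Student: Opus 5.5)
Corollary \ref{coro:beta:1:2:critical} should follow from Theorem~\ref{thm:indices:SW:q=p*} by choosing the exponents so that both mixed-homogeneity weights collapse to pure powers of $\dist{\cdot,E}$. Set
$$
\gamma_1=\gamma_2:=\beta_2, \qquad \delta_1=\delta_2:=\beta_1 .
$$
When $\gamma_1=\gamma_2$, definition \eqref{def:u:F} gives $u_{\gamma_1,\gamma_2}(x)=(\dist{x,E}/\diam{E})^{\beta_2}$, and likewise $u_{\delta_1,\delta_2}(x)=(\dist{x,E}/\diam{E})^{\beta_1}$. The normalizing factors $\diam{E}^{-\beta_2}$ and $\diam{E}^{-\beta_1}$ are finite positive constants because $0<\diam{E}<\infty$. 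So \eqref{bound:T:alpha:SW:2} becomes the desired inequality, with the constant multiplied by $\diam{E}^{\beta_2/q-\beta_1/p}$. Since $\beta_2=\frac{q}{p}\beta_1$, this exponent is actually zero, but that fact is not needed.

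Next I check the hypotheses of Theorem~\ref{thm:indices:SW:q=p*}.
\begin{itemize}
\item Condition \eqref{cond:gamma:1:2} requires $-\gamma_j=-\beta_2\in I_\infty(E)$, which is exactly \eqref{coro:beta:1:2:a}.
\item Condition \eqref{cond:delta:1:2} requires $\delta_j p'/p=\beta_1 p'/p\in I_\infty(E)$. Since $p'/p=1/(p-1)$, this is $\beta_1/(p-1)\in I_\infty(E)$, which is \eqref{coro:beta:2:a}.
\item Conditions \eqref{gamma1:delta1:q=p*} and \eqref{gamma2:delta2:q=p*} become $\beta_2\ge \frac{q}{p}\beta_1$ and $\beta_2\le\frac{q}{p}\beta_1$. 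Both hold with equality because $\beta_2=\frac{q}{p}\beta_1$ by \eqref{coro:beta:1:2:a}.
\end{itemize}
The theorem then applies directly.

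The one real obstacle is the range of $\alpha$. The corollary allows $\alpha=0$, while Theorem~\ref{thm:indices:SW:q=p*} assumes $0<\alpha<n$. For $0<\alpha<n$ the argument above is complete.

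For $\alpha=0$ we get $q=p$, and $I_0$ is not an integrable-kernel operator. I would interpret it as the identity, consistent with \cite[Theorem~10.4]{Pa-UT}, where the $\alpha=0$ case is a trivial weighted identity. The inequality then reads
$$
\int_{\rn}|f|^p\,\dist{x,E}^{\beta_2}\dx\le C\int_{\rn}|f|^p\,\dist{x,E}^{\beta_1}\dx .
$$
This holds with $C=1$, since $q=p$ forces $\beta_2=\beta_1$. If the paper intends $\alpha>0$ throughout, this case simply does not arise.
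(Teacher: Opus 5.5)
Your proposal is correct and is essentially the paper's own argument: set $\gamma_1=\gamma_2:=\beta_2$ and $\delta_1=\delta_2:=\beta_1$, then check \eqref{cond:gamma:1:2}, \eqref{cond:delta:1:2}, \eqref{gamma1:delta1:q=p*} and \eqref{gamma2:delta2:q=p*} so that Theorem~\ref{thm:indices:SW:q=p*} applies. Your extra treatment of the endpoint $\alpha=0$ covers a case the paper's proof passes over silently. That theorem does not literally cover $\alpha=0$, but with $I_0$ read as the identity the inequality is trivial, since $q=p$ and $\beta_1=\beta_2$.
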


As another consequence of Theorem \ref{thm:indices:SW:q=p*} we obtain the following two-weight Hardy-Sobolev inequality in the critical case $q = np/(n-p)$ (that is, $\Theta_{p,q}(1)=0$).

\begin{thm}\label{thm:app:q=p*:E}  Fix $1 < p < n$ and set $q:=np/(n- p)$. Let $E \subset \rn$ be a median porous set with $0 < \diam{E} < \infty$. Then, for any exponents $\gamma_1, \gamma_2, \delta_1, \delta_2 \in \re$ verifying 
\begin{align*}
-\gamma_j, \delta_j p'/p & \in I_\infty(E) \quad \text{for } j=1,2,\\
\gamma_1 & \geq \frac{n}{(n-p)}\delta_1 \\
\gamma_2 & \leq \frac{n }{(n-p)}\delta_2,
\end{align*}
we have $u_{\gamma_1, \gamma_2}, u_{\delta_1, \delta_2}^{-p'/p} \in \Ainf$ and there exists $C >0$ depending only on $p$, $n$, $\gamma_1, \gamma_2, \delta_1, \delta_2$, and $E$, such that 
$$
\left(\int_{\rn} |f(x)|^q  u_{\gamma_1, \gamma_2}(x) \dx \right)^{1/q}\leq C \left(\int_{\rn} |\nabla f(x)|^p \, u_{\delta_1, \delta_2}(x) \dx \right)^{1/p},   \quad \forall f \in C_0^1(\rn).
$$
\end{thm}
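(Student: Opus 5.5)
This result is Theorem~\ref{thm:indices:SW:q=p*} specialized to $\alpha=1$, combined with the pointwise bound $|f(x)|\leq c_n I_1(|\nabla f|)(x)$ for $f\in C_0^1(\rn)$. With $\alpha=1$ we have $q=np/(n-p)$, so $q/p=n/(n-p)$. Hence the hypotheses $\gamma_1\geq \frac{n}{n-p}\delta_1$ and $\gamma_2\leq \frac{n}{n-p}\delta_2$ are exactly \eqref{gamma1:delta1:q=p*} and \eqref{gamma2:delta2:q=p*}. The membership conditions $-\gamma_j,\ \delta_jp'/p\in I_\infty(E)$ are \eqref{cond:gamma:1:2} and \eqref{cond:delta:1:2}. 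Since $1<p<n=n/\alpha$, all hypotheses of Theorem~\ref{thm:indices:SW:q=p*} hold. That theorem then gives $u_{\gamma_1,\gamma_2},\,u_{\delta_1,\delta_2}^{-p'/p}\in\Ainf$, together with
\[
\left(\int_{\rn}|I_1(g)|^q u_{\gamma_1,\gamma_2}\dx\right)^{1/q}\leq C\left(\int_{\rn}|g|^p u_{\delta_1,\delta_2}\dx\right)^{1/p}
\]
for every measurable $g$.

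Next comes the representation formula. For $f\in C_0^1(\rn)$ and any $x$, integrate $\frac{d}{dt}f(x+t\omega)$ along rays from $x$. Since $f$ vanishes at infinity, $f(x)=-\int_0^\infty \partial_t f(x+t\omega)\,dt$. Averaging over $\omega\in S^{n-1}$ and passing to polar coordinates gives
\[
|f(x)|\leq \frac{1}{|S^{n-1}|}\int_{\rn}\frac{|\nabla f(y)|}{|x-y|^{n-1}}\,dy=c_n I_1(|\nabla f|)(x).
\]
Raising this to the power $q$, integrating against $u_{\gamma_1,\gamma_2}$, and applying the displayed estimate with $g=|\nabla f|$ yields the claimed inequality.

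The main point to check is that the representation formula is valid for $f$ merely vanishing at infinity, without compact support. The ray integral converges provided $\int_0^\infty|\nabla f(x+t\omega)|\,dt<\infty$ for a.e.\ $\omega$. This holds whenever the right-hand side $\int|\nabla f|^p u_{\delta_1,\delta_2}$ is finite and $I_1(|\nabla f|)(x)<\infty$. Otherwise, the bound $|f(x)|\leq \liminf_R \left|\int_0^R\partial_tf\,dt\right|$ still holds with $f(x+R\omega)\to0$, and monotone convergence gives the inequality in every case. If the right-hand side is infinite, there is nothing to prove.
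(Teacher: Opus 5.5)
Your proposal is correct and matches the paper's argument: specialize Theorem~\ref{thm:indices:SW:q=p*} to $\alpha=1$, where $q/p=n/(n-p)$ makes the exponent conditions coincide, and combine it with the pointwise bound \eqref{f:I1:grad:f}. Your extra care about $f$ only vanishing at infinity is harmless but unnecessary, since $|f(x)|\le \int_0^\infty|\nabla f(x+t\omega)|\,dt$ holds for every direction $\omega$ as an inequality in $[0,\infty]$, whereas the paper simply cites \cite[Lemma 7.14]{gt}.
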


\subsection{A blow-up rate for the constant $C$ in \eqref{Hardy:ineq:p:q:P} under weak porosity}  Regarding the blow-up rate of the constant $C >0$ from  \eqref{Hardy:ineq:p:q:P}, in the case of a bounded, weakly porous set and as the left integral in \eqref{Hardy:ineq:p:q:P} is performed increasingly closer to $E$ we have

\begin{thm}\label{thm:Hardy:ineq:p:q:WP:2}  Let $n > 1$ and $1 < p  \leq q < n/\Theta_{p,q}(1)$ with $\Theta_{p,q}(1) :=1 + n (1/q-1/p) > 0$. Fix a  weakly porous set $E \subset \rn$  with $0 < \diam{E} < \infty$ and $\beta > 0$ with $\beta p'/p < {\rm{Mu}}_1(E)$. Then, there exists $C > 0$ such that
\begin{equation*}
\left(\int_{D(\kappa)} |f(x)|^q   d_E(x)^{q(n - p + \beta)/p - n} \dx \right)^{1/q}\leq C \kappa^{-\Theta_{p,q}(1)} \left(\int_{\rn} |\nabla f(x)|^p d_E(x)^{\beta} \dx \right)^{1/p}
\end{equation*}
for every $f \in C_0^1(\rn)$ and $0 < \kappa < 1$, with 
$
D(\kappa) := \{x \in \rn: \dist{x, E} \geq \kappa \, \diam{E}\}.
$
\end{thm}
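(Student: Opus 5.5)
Since $f\in C_0^1(\rn)$, the pointwise bound $|f(x)|\leq c_n I_1(|\nabla f|)(x)$ holds, so the theorem reduces to a weighted estimate for $I_1$ from $L^p(d_E^\beta)$ to $L^q(\chi_{D(\kappa)} d_E^{q(n-p+\beta)/p-n})$, where $d_E=\dist{\cdot,E}$, with operator norm $O(\kappa^{-\Theta_{p,q}(1)})$. I will obtain this from Theorem \ref{thm:indices:SW} with $\alpha=1$. The idea is to bound the truncated left weight by a mixed-homogeneity weight $u_{\gamma_1,\gamma_2}$ and to keep track of how the constant depends on $\kappa$.

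For the right-hand side I take $\delta_1=\delta_2=\beta$, so that $u_{\delta_1,\delta_2}=(d_E/\diam{E})^\beta$. Weak porosity gives ${\rm Mu}_p(E)={\rm Mu}_1(E)$ and an interval of admissible exponents, and $0<\beta p'/p<{\rm Mu}_1(E)$ then gives $\delta_jp'/p\in I_\infty(E)$. Next I set $\gamma_2:=q(n-p+\beta)/p-n=q\beta/p-q\Theta_{p,q}(1)$, which is exactly the condition \eqref{cond:gamma2:delta2}. For the local part I choose $\gamma_1$ with $\gamma_1/q-\beta/p<0$, very close to $0$, so that \eqref{cond:s:dimB:n} holds; this is possible because $\overline{\dim_M}(E)<n$ by Remark \ref{rmk:Hardy:beyond:porosity}. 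Concretely, $\gamma_1=q\beta/p-\eta$ for a small $\eta>0$. I also need $-\gamma_j\in I_\infty(E)$. For weakly porous sets, $d_E^{-s}\in A_\infty$ holds for $s<{\rm Mu}_1(E)$ and for all $s\le 0$. Since $\gamma_2>-n$ (because $q>0$), and under the hypothesis $q<n/\Theta_{p,q}(1)$ one has $-\gamma_2<n$, I will check, perhaps using $\Muinf(E)$ or the fact that large positive powers $d_E^{t}$ are $A_\infty$ for bounded $E$, that both $-\gamma_1$ and $-\gamma_2$ lie in $I_\infty(E)$. This is where the hypothesis $q<n/\Theta$ should enter.

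The comparison of weights goes as follows. On $D(\kappa)$, write $t=d_E/\diam{E}\geq\kappa$. Where $t\ge1$, the weight $d_E^{\gamma_2}$ equals a multiple of $u_{\gamma_1,\gamma_2}$. Where $\kappa\le t<1$, we have $\gamma_2<\gamma_1$ because $\eta<q\Theta_{p,q}(1)$, and so $t^{\gamma_2}=t^{\gamma_1}t^{\gamma_2-\gamma_1}\le \kappa^{\gamma_2-\gamma_1}t^{\gamma_1}$. This shows $\chi_{D(\kappa)}d_E^{\gamma_2}\lesssim \kappa^{-(q\Theta-\eta)}u_{\gamma_1,\gamma_2}$. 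Taking $q$-th roots gives the factor $\kappa^{-\Theta+\eta/q}\le\kappa^{-\Theta}$ for $\kappa<1$. Inserting this into \eqref{bound:T:alpha:SW}, with the constant fixed and independent of $\kappa$, finishes the proof.

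The main obstacle is verifying the $A_\infty$ membership $-\gamma_2\in I_\infty(E)$ in the regime allowed by the hypotheses. The exponent $-\gamma_2=n+q\Theta-q\beta/p$ may be positive, and then the requirement is essentially that it be below ${\rm Mu}_1(E)$ or $\Muinf(E)$, which is not evidently implied. If this fails, I would try to pick $\gamma_2$ smaller, which is allowed since $t^{\gamma_2}$ only decreases on $t\ge1$, while keeping \eqref{cond:gamma2:delta2} by also adjusting $\delta_2$ upward. The weight $u_{\beta,\delta_2}$ is then still comparable to $d_E^\beta$ from above on the relevant region only after a localization argument, so this adjustment would need care.
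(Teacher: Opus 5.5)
There is a genuine gap, at exactly the point you flagged, and it cannot be closed inside Theorem \ref{thm:indices:SW}. First, a correction: $-\gamma_2=q\Theta_{p,q}(1)-q\beta/p$, not $n+q\Theta_{p,q}(1)-q\beta/p$. Also, $\gamma_2>-n$ means $n-p+\beta>0$. This holds because, when $\Theta_{p,q}(1)>0$, the hypothesis $q<n/\Theta_{p,q}(1)$ is equivalent to $p<n$, since $q\Theta_{p,q}(1)=n+q(1-n/p)$. For weakly porous $E$ you have $I_\infty(E)=(-\infty,\Mu_1(E))$, so \eqref{cond:gamma:1:2} with $j=2$ demands $q\Theta_{p,q}(1)-q\beta/p<\Mu_1(E)$. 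The hypotheses do not give this. Take a closed flat disk $E\subset\re^3$ (so $\Mu_1(E)=1$), $p=q=2$, and any admissible $\beta\in(0,1)$: then $-\gamma_2=2-\beta>1$, and $\dist{\cdot,E}^{\gamma_2}$ is not even locally integrable. Nor can the far-field exponents be re-chosen. Dominating the target weight on $\{\dist{\cdot,E}\ge\diam{E}\}$ forces $\gamma_2\ge q(n-p+\beta)/p-n$; being dominated by $\dist{\cdot,E}^\beta$ there forces $\delta_2\le\beta$; and \eqref{cond:gamma2:delta2} then forces equality in both. Your fallback runs the wrong way. Lowering $\gamma_2$ enlarges $-\gamma_2$ and breaks the comparison on $\{t\ge 1\}$, and \eqref{cond:gamma2:delta2} would then require lowering, not raising, $\delta_2$.

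The missing idea is that \eqref{cond:gamma:1:2} is only a convenient sufficient condition. What Theorem \ref{thm:SW:Ainf:p:q} actually needs is $u_{\gamma_1,\gamma_2}\in\Ainf$. For bounded $E$ the far-field part behaves like a power of $|x-x_0|$ with $x_0\in E$, so $\gamma_2>-n$ suffices. Indeed, set $D=\diam{E}$ and $t=\dist{x,E}/D$. The inequalities $\dist{x,E}\le|x-x_0|\le\dist{x,E}+D$ give $\max\{1,|x-x_0|/D\}\in[1,2)$ on $\{t<1\}$ and $|x-x_0|/D\in[t,2t]$ on $\{t\ge1\}$. Your $\gamma_1=q\beta/p-\eta$ is positive for small $\eta$ and exceeds $\gamma_2$, so $u_{\gamma_1,\gamma_2}$ is comparable to $\min\{t^{\gamma_1},\max\{1,|x-x_0|/D\}^{\gamma_2}\}$. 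Both entries are in $\Ainf$: the first because $-\gamma_1<0$, the second because $\gamma_2>-n$. Hence $u_{\gamma_1,\gamma_2}\in\Ainf$ by the lattice property. Then invoke Theorem \ref{thm:SW:Ainf:p:q} directly. The Lorentz-norm computation in the proof of Theorem \ref{thm:indices:SW} uses only \eqref{cond:s:dimB:n} and \eqref{cond:gamma2:delta2}, not \eqref{cond:gamma:1:2}. With this patch the rest of your argument is correct: the choice $\delta_1=\delta_2=\beta$, the comparison on $D(\kappa)$ giving the factor $\kappa^{-\Theta_{p,q}(1)+\eta/q}$, and the pointwise bound by $I_1(|\nabla f|)$. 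The repaired argument is a genuinely different route from the paper's. The paper instead applies Theorem \ref{thm:Mg:p<q:app} with $v=\dist{\cdot,E}^\beta\in RH_\infty(\rn)$ and $g_\kappa=\dist{\cdot,E}^{-\gamma}\chi_{E_{\kappa\diam{E}}}$. It then uses Lemma \ref{lemma:M:d:gamma:dk} to get $\mathcal{M}(g_\kappa)\gtrsim\kappa^n\norm{g_\kappa}{L^1(\rn)}\dist{\cdot,E}^{-n}$ on $D(\kappa)$, so that the $L^1$ norm of $g_\kappa$ cancels.
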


The rest of the article is organized as follows: Section \ref{secc:prelim} contains preliminary material on Muckenhoupt weights as well as on various notions of dimension and porosity of sets in $\rn$. Section  \ref{sec:gE} introduces a function $g_E$ associated to a bounded set $E \subset \rn$ and characterizes the condition $g_E \in L^{1, \infty}(\rn)$ in terms of measure-theoretic properties of $E$. Section \ref{sec:proofs:dist:E} contains the proofs of Theorems \ref{thm:indices:SW}, \ref{thm:gamma:1:2:delta:1:2:p=q},  \ref{thm:indices:SW:q=p*}, \ref{thm:app:q=p*:E} and that of Corollaries \ref{coro:v=1:p<q<p*} and \ref{coro:beta:1:2:critical}. These proofs heavily rely on the aforementioned function $g_E$ to identify pairs $(u_{\gamma_1, \gamma_2},u_{\delta_1, \delta_2})$ satisfying either the condition \eqref{u:1/q:C:v:1/p} from Theorem \ref{thm:SW:Ainf:p:q:alpha0=0} or the condition \eqref{cond:u:v:Theta>0} from Theorem \ref{thm:SW:Ainf:p:q}, when $E \subset \rn$ is a bounded, median porous set. Finally, Section \ref{sec:proof:WP:2} is devoted to the proof of Theorem \ref{thm:Hardy:ineq:p:q:WP:2}.  

\section{Preliminaries}\label{secc:prelim}

\subsection{Muckenhoupt weights}\label{sec:Ap:weights} 
Recall that for $1 < p < \infty$ a weight $w$ in $\rn$ (that is, $w \in L^1_{\rm{loc}}(\rn)$ with $w \geq 0$ a.e. in $\rn$) is said to belong to the \emph{Muckenhoupt class} $A_p(\rn)$ if 
\begin{equation}\tag*{$(A_p)$}\label{Ap}
[w]_{A_p}:= \sup\limits_{Q} \left(\fint_Q w(x) \dx \right)  \left(\fint_Q w(x)^{\frac{-1}{(p-1)}} \dx \right)^{p-1} < \infty
\end{equation}
where $Q \subset \rn$ is a cube and $\fint_Q u$ denotes the average $\frac{1}{|Q|} \int_Q u$. The endpoint classes for $p=1$ and $p=\infty$ are defined by
\begin{equation}\tag*{$(A_1)$}\label{def:A1}
[w]_{A_1}:= \sup\limits_{Q} \left(\fint_Q w(x) \dx \right)  \left(\essinf_Q w \right)^{-1} < \infty
\end{equation}
and
\begin{equation}\tag*{$(A_\infty)$}\label{def:Ainf}
[w]_{A_\infty}:= \sup\limits_{Q} \left(\fint_Q w(x) \dx \right)  \exp \left(-\fint_Q \ln w(x) \dx \right) < \infty.
\end{equation}
As it turns out, $
A_\infty(\rn) = \bigcup_{p\geq1}A_p(\rn)
$
(see for instance \cite[Section~9.3]{GrafakosBook}). Typical examples of $A_p$-weights are the locally integrable powers of $|x|$. More precisely, for $1 < p < \infty$, the weight $|x|^a \in A_p(\rn)$ if and only if $-n < a < n (p-1)$ and $|x|^a \in A_1(\rn)$ if and only if $-n < a \leq 0$, see for instance  \cite[p.~286]{GrafakosBook}. When the locally integrable powers of $|x|$ are replaced by those of a distance to a  set $E \subset \rn$, their membership to $\Ainf$, or to a particular $A_p(\rn)$ class, will be determined by corresponding notions of porosity applied to $E$, as described in Section~\ref{secc:distance:weight:porosity}.  In Section~\ref{sec:proofs:dist:E} we will construct examples of $\Ainf$-weights based on the following fact: For $1 < p < \infty$, the class $A_p(\rn)$ forms a lattice in the sense that if $w_1$ and  $w_2$ belong to $A_p(\rn)$, then $\max\{w_1, w_2\}$ and $\min\{w_1, w_2\} $ belong to $A_p(\rn)$ as well, see \cite[Proposition~4.3]{KKM}. Since $\Ainf = \bigcup_{1 \leq p < \infty} A_p(\rn)$ and $A_{p_1}(\rn) \subset A_{p_2}(\rn)$ whenever $1 \leq p_1 \leq p_2 < \infty$, it follows that if $w_1$ and $w_2$ belong to $\Ainf$, then so do $\max\{w_1, w_2\}$ and $\min\{w_1, w_2\}$.

\subsection{Reverse H\"older classes} For $1 < s < \infty$ we write $w \in RH_s(\rn)$ if
\begin{equation*}
[w]_{RH_s}:= \sup\limits_{Q} \left(\fint_Q w(x)^s \dx \right)^{1/s}  \left(\fint_Q w(x) \dx \right)^{-1} < \infty.
\end{equation*}
As it turns out (see for instance \cite[Section~9.3]{GrafakosBook}),
$
\bigcup_{s > 1} RH_s(\rn) = A_\infty(\rn).
$
For $s=\infty$ we write $w \in RH_\infty(\rn)$ if 
\begin{equation*}
[w]_{RH_\infty}:= \sup\limits_{Q} \left(\esssup\limits_{Q} w\right)  \left(\fint_Q w(x) \dx \right)^{-1} < \infty.
\end{equation*}
It is a known fact that the class $RH_\infty(\rn)$ is invariant under positive powers, that is, the implication
\begin{equation}\label{RHinfty:powers}
w \in RH_\infty(\rn), \ell > 0 \Rightarrow w^\ell \in RH_\infty(\rn)
\end{equation}
holds true with $[w^\ell]_{RH_\infty(\rn)}$ depending only on $[w]_{RH_\infty(\rn)}$, $\ell$, and $n$, see \cite[Theorem~4.2]{CUN}.

\begin{remark}\label{comments:A1:RHinfty} If $w \in A_1(\rn)$ then there exists $\theta_0 > 0$ (depending only on $[w]_{A_1(\rn)}$ and $n$) such that $w^{-\theta_0} \in RH_\infty(\rn)$. Conversely, if $w \in RH_\infty(\rn)$, then there exists $\theta_1 >0$ (depending only on $[w]_{RH_\infty(\rn)}$ and $n$) such that $w^{-\theta_1} \in A_1(\rn)$, see \cite[Corollary 4.5]{CUN}. 
\end{remark}

\subsection{Weighted inequalities for the fractional integral operator} Let us record the following two results from \cite{MaSo1} regarding weighted estimates for $I_\alpha$ in the cases $\Theta_{p,q}(\alpha) = 0$ and $\Theta_{p,q}(\alpha) > 0$, with $\Theta_{p,q}(\alpha)$ as in \eqref{def:alpha0}. In the statements below, the notation $(u,v)$ denotes a pair of nonnegative functions $u, v$ defined on $\rn$.

\begin{thm}[Theorem 1.1 from \cite{MaSo1}]\label{thm:SW:Ainf:p:q:alpha0=0} Fix $1 < p \leq q < \infty$ and $0 < \alpha < n$ such that $\Theta_{p,q}(\alpha) = 0$. Given $(u,v)$ with $u, v^{-p'/p} \in \Ainf$ suppose that there exists $C_0 > 0$ with
\begin{equation}\label{u:1/q:C:v:1/p}
u(x)^{1/q} \leq C_0 \, v(x)^{1/p}, \quad \text{a.e. } x \in \rn.
\end{equation}
Then
\begin{equation*}
\left(\int_{\rn} |I_\alpha(f)(x)|^q  u(x) \dx \right)^{1/q}\leq C_1  C_0 \left(\int_{\rn} |f(x)|^p v(x) \dx \right)^{1/p},
\end{equation*}
for every measurable $f$, where $C_1 > 0$ depends only on $n$, $p$, $q$, $\alpha$, $[u]_{\Ainf}$, and $[v^{-p'/p}]_{\Ainf}$.
\end{thm}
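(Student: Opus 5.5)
The plan is to discretize $I_\alpha$ and reduce \eqref{u:1/q:C:v:1/p} to a two-weight Muckenhoupt-type (``joint $A_{p,q}$'') condition. The $\Ainf$ hypotheses on $u$ and $\sigma:=v^{-p'/p}$ would then supply the extra structure needed to close the estimate.

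\emph{Reduction.} By the standard change of function $f=g\sigma$, the desired inequality is equivalent to
$$\|I_\alpha(g\sigma)\|_{L^q(u)}\lesssim \|g\|_{L^p(\sigma)}.$$
We may assume $g\ge 0$. By the usual three-lattice trick, $I_\alpha g\lesssim \sum_{k=1}^{3^n}\sum_{Q\in\mathcal D_k}|Q|^{\alpha/n}\langle g\rangle_Q\chi_Q$. After a sparse domination, it suffices to bound operators $T_{\mathcal S}g=\sum_{Q\in\mathcal S}|Q|^{\alpha/n}\langle g\rangle_Q\chi_Q$ over sparse families $\mathcal S$, uniformly in $\mathcal S$, from $L^p(\sigma)$ to $L^q(u)$ with $g$ replaced by $g\sigma$.

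\emph{Joint condition.} Next I would verify that for every cube $Q$,
$$|Q|^{\alpha/n-1}u(Q)^{1/q}\sigma(Q)^{1/p'}\le C\,C_0 .$$
Since $u,\sigma\in\Ainf$, the $(A_\infty)$ definition gives $\fint_Q u\le[u]_{\Ainf}\exp(\fint_Q\ln u)$, and similarly for $\sigma$. Hence
$$u(Q)^{1/q}\sigma(Q)^{1/p'}\lesssim |Q|^{1/q+1/p'}\exp\Big(\fint_Q \ln\big(u^{1/q}\sigma^{1/p'}\big)\Big).$$
Because $\sigma^{1/p'}=v^{-1/p}$, hypothesis \eqref{u:1/q:C:v:1/p} gives $u^{1/q}\sigma^{1/p'}\le C_0$ a.e. In the critical case $\Theta_{p,q}(\alpha)=0$ we have $1/q+1/p'=1-\alpha/n$, so the claim follows. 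Note that the constant depends only on $C_0$ and the $\Ainf$ characteristics; this is exactly where criticality is used.

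\emph{Closing the estimate (main obstacle).} The joint condition alone does not suffice for two-weight boundedness, so the $\Ainf$ property must be used a second time. I would verify Sawyer's testing conditions for the sparse operator (valid for $p\le q$):
$$\int_Q\Big(\sum_{P\in\mathcal S,\,P\subset Q}|P|^{\alpha/n}\langle\sigma\rangle_P\chi_P\Big)^q u\lesssim\sigma(Q)^{q/p},$$
together with the dual condition with the roles of $(u,q)$ and $(\sigma,p')$ exchanged. By the joint condition, each term satisfies $|P|^{\alpha/n}\langle\sigma\rangle_P\lesssim \sigma(P)^{1/p}u(P)^{-1/q}$.

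The summation should come from the $\Ainf$ decay along sparse chains: for a sparse family and $w\in\Ainf$, sparseness gives $|E_P|\ge\eta|P|$, and $\Ainf$ then gives $w(E_P)\gtrsim w(P)$. Consequently $\sum_{P\subset Q}w(P)\lesssim w(Q)$, and the collection behaves like a Carleson family for both $u$ and $\sigma$. I would organize the sum by principal cubes (stopping cubes for $\langle\sigma\rangle_P$ or for $u(P)/|P|$), estimate each layer by the joint condition, and sum the layers using the Carleson packing in $u$ and $\sigma$. The exponents are favorable ($q/p\ge1$), which allows passing the $q$-th power inside via the $\ell^p\subset\ell^q$ embedding.

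This summation is the step I expect to be delicate, in particular handling the mismatch between the $u$-packing and the $\sigma$-packing. If it resists, I would instead appeal to the known result that for $u,\sigma\in\Ainf$ the joint $A_{p,q}$ condition implies both testing conditions (a Pérez/Cruz-Uribe–Moen type theorem), and then conclude by Sawyer's theorem. The resulting constant depends only on $n,p,q,\alpha$ and the two $\Ainf$ characteristics, times $C_0$, as stated.
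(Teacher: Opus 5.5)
The paper does not prove this statement. It quotes it from \cite{MaSo1} and uses it as a black box, so there is no in-paper argument to compare against. Your proposal is correct in its essential part. The reduction $f=g\sigma$ works because $\sigma=v^{1-p'}$, so $\int|f|^pv=\int|g|^p\sigma$. The verification of the joint condition is also right: the geometric-mean form of $A_\infty$ applied to $u$ and $\sigma$, the identity $\sigma^{1/p'}=v^{-1/p}$, and $1/q+1/p'=1-\alpha/n$ together give it. That computation is indeed the only place where $\Theta_{p,q}(\alpha)=0$ enters. The final constant is linear in $C_0$ by scaling, since $[\lambda u]_{A_\infty}=[u]_{A_\infty}$.

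Your primary route does not establish the closing step. Organizing the sparse sum by principal cubes and reconciling the $u$-packing with the $\sigma$-packing is where all the difficulty of the two-weight problem lies, and your sketch does not resolve it. As written, the proof rests entirely on the fallback citation. That citation is legitimate, but a more elementary one makes the sparse and testing machinery unnecessary.

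In the critical case $1/q=1/p-\alpha/n<1/p$, so $p<q$ automatically. For $p<q$, the power-bump theorem of Sawyer and Wheeden (Amer.\ J.\ Math., 1992) gives $I_\alpha\colon L^p(v)\to L^q(u)$ as soon as, for some $r>1$,
\[
\sup_Q |Q|^{\frac{\alpha}{n}+\frac1q-\frac1p}\Big(\fint_Q u^r\Big)^{\frac1{qr}}\Big(\fint_Q \sigma^r\Big)^{\frac1{p'r}}<\infty .
\]
Here the power of $|Q|$ equals $1$. Since $A_\infty=\bigcup_{s>1}RH_s$, you can choose $r>1$ with $u,\sigma\in RH_r$, where $r$ and the $RH_r$ constants depend only on $n$ and the two $A_\infty$ characteristics. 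The bumped averages are then controlled by the plain ones. Your joint-condition computation then finishes the proof, with exactly the stated dependence of $C_1$.
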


For the case $\Theta_{p,q}(\alpha) > 0$ we have

\begin{thm}[Theorem 1.2 from \cite{MaSo1}]\label{thm:SW:Ainf:p:q} Fix $1 < p \leq q < \infty$ and $0 < \alpha < n$ such that $\Theta_{p,q}(\alpha) > 0$ and $(u,v)$ with $u, v^{-p'/p} \in \Ainf$ verifying
\begin{equation}\label{cond:u:v:Theta>0}
\frac{u^{1/q}}{v^{1/p}} \in L^{n/\Theta_{p,q}(\alpha), \infty}(\rn). 
\end{equation}
Then
$$
\left(\int_{\rn} |I_\alpha(f)(x)|^q  u(x) \dx \right)^{1/q}\leq C_2 \norm{u^{1/q}/v^{1/p}}{L^{n/\Theta_{p,q}(\alpha), \infty}(\rn)} \left(\int_{\rn} |f(x)|^p v(x) \dx \right)^{1/p},
$$
for every measurable $f$, where $C_2 > 0$ depends only on $n$, $\alpha$, $p$, $q$, $[u]_{\Ainf}$, and $[v^{-p'/p}]_{\Ainf}$.   
\end{thm}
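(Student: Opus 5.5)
The plan is to reduce the subcritical estimate to a critical-type sparse bound. The weak-$L^{n/\Theta_{p,q}(\alpha)}$ norm of $w:=u^{1/q}/v^{1/p}$ will absorb the gap $\Theta_{p,q}(\alpha)$ on each cube, and the $A_\infty$ hypotheses will be used only through the reverse Jensen inequality $\fint_Q W\le [W]_{A_\infty}\exp(\fint_Q\ln W)$.

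\textbf{Step 1 (sparse domination and duality).} By the standard sparse domination of fractional integrals, for nonnegative $f$ there are finitely many dyadic lattices and sparse families $\mathcal S$ with
$$I_\alpha f\lesssim \sum_{\mathcal S}\sum_{Q\in\mathcal S}|Q|^{\alpha/n}\langle f\rangle_Q\mathbf 1_Q .$$
Set $\sigma:=v^{-p'/p}$ and write $f=F\sigma^{1/p'}$, i.e.\ $F=fv^{1/p}$, so that $\|F\|_{L^p}=\|f\|_{L^p(v)}$. Test the left side against $h\,u^{1/q'}$ with $\|h\|_{L^{q'}}\le1$; recall $u^{1/q'}=u^{1-1/q}$. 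It then suffices to bound
$$\Lambda:=\sum_{Q\in\mathcal S}|Q|^{\alpha/n}\,\langle F\sigma^{1/p'}\rangle_Q\,\langle h u^{1/q'}\rangle_Q\,|Q|$$
by $C\|w\|_{L^{n/\Theta,\infty}}\|F\|_p\|h\|_{q'}$.

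\textbf{Step 2 (key local estimate).} Apply H\"older inside each average with weighted measures. This gives $\langle F\sigma^{1/p'}\rangle_Q\le \langle F^p\sigma^{-(p-1)}\rangle_{Q,\sigma}^{1/p}\langle\sigma\rangle_Q$, up to the normalization by $\sigma(Q)$. Similarly, $\langle hu^{1/q'}\rangle_Q$ is controlled by $\langle u\rangle_Q$ times a $u$-weighted average of $h u^{-1/q}$. After normalizing, the weights appear only through
$$\langle\sigma\rangle_Q^{1/p'}\langle u\rangle_Q^{1/q}\le C\,\exp\Big(\fint_Q\ln(\sigma^{1/p'}u^{1/q})\Big)=C\exp\Big(\fint_Q \ln w\Big),$$
where the inequality is reverse Jensen for $u$ and $\sigma$. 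Now the geometric mean of a weak-$L^r$ function over $Q$ is at most $c_r|Q|^{-1/r}\|w\|_{L^{r,\infty}}$: by Jensen it is bounded by $(\fint_Q w^{s})^{1/s}$ for small $s$, followed by Kolmogorov's inequality. With $r=n/\Theta_{p,q}(\alpha)$ this yields the factor $|Q|^{-\Theta_{p,q}(\alpha)/n}\|w\|$. Since $\alpha/n-\Theta_{p,q}(\alpha)/n=1/p-1/q$, each term of $\Lambda$ is bounded by
$$\|w\|\;|Q|^{1/p-1/q}\,\big(\text{weighted averages of }F,h\big)\,|Q|,$$
which is exactly the shape of the critical case.

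\textbf{Step 3 (summing).} Use sparseness, $|Q|\le 2|E_Q|$ with the $E_Q$ disjoint. Dominate the weighted averages by the weighted dyadic maximal functions $M^{\mathcal D}_\sigma$ and $M^{\mathcal D}_u$, which are bounded on $L^p(\sigma)$ and $L^{q'}(u)$ with constants independent of the weights. Then conclude by H\"older, using $p\le q$ to pass from $\ell^p$ to $\ell^q$ sums via the factor $|Q|^{1/p-1/q}$, as in the proof of Theorem~\ref{thm:SW:Ainf:p:q:alpha0=0}.

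I expect the main obstacle to be Step 2 and its interface with Step 3: arranging the H\"older splits so that \emph{only} the product $\langle\sigma\rangle^{1/p'}\langle u\rangle^{1/q}$ appears, which is what reverse Jensen can collapse to the geometric mean of $w$. Any leftover power of $\sigma(Q)$ or $u(Q)$ must be exactly what the weighted maximal-function sums absorb. If the direct split fails, I would first try inserting an auxiliary critical exponent $1/r=1/p-\alpha/n$ and reusing the critical-case argument verbatim, with the extra $|Q|^{-\Theta/n}$ handled as above.
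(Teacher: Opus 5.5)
The paper gives no proof of this statement; it is quoted as Theorem~1.2 of \cite{MaSo1}. So your argument has to be judged on its own merits.

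\textbf{What is right.} Your core reduction is correct. With $\sigma:=v^{-p'/p}$ one has $w:=u^{1/q}v^{-1/p}=u^{1/q}\sigma^{1/p'}$. Reverse Jensen for $u$ and $\sigma$, followed by $\exp(\fint_Q\ln w)\le(\fint_Q w^s)^{1/s}\le C_s|Q|^{-\Theta_{p,q}(\alpha)/n}\|w\|_{L^{n/\Theta_{p,q}(\alpha),\infty}}$ for $0<s<n/\Theta_{p,q}(\alpha)$, yields the two-weight condition $\sup_Q|Q|^{\Theta_{p,q}(\alpha)/n}\langle u\rangle_Q^{1/q}\langle\sigma\rangle_Q^{1/p'}\lesssim\|w\|_{L^{n/\Theta_{p,q}(\alpha),\infty}}$.

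\textbf{The gap.} It is in Step~3, exactly where you claim $A_\infty$ is no longer needed. Lebesgue sparseness $|Q|\le 2|E_Q|$ does not let you bound $\sum_{Q\in\mathcal S}(\langle\phi\rangle^\sigma_Q)^p\sigma(Q)$ by $\int(M^{\mathcal D}_\sigma\phi)^p\sigma$. For that you need $\sigma(Q)\le C\sigma(E_Q)$ and $u(Q)\le C u(E_Q)$, i.e.\ that $\mathcal S$ is a Carleson family for the measures $\sigma\,dx$ and $u\,dx$. This fails for general weights. For the sparse family $\{[0,2^{-k})\}_{k\ge0}$ and $\sigma=\mathbf 1_{[0,\eps)}$ one gets $\sum_{2^{-k}\ge\eps}\sigma([0,2^{-k}))\approx\eps\log(1/\eps)\gg\sigma([0,1))$. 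This is therefore a second, indispensable use of $u,\sigma\in\Ainf$: since $|E_Q|\ge\frac12|Q|$, the $A_\infty$ property gives $\sigma(E_Q)\ge c\,\sigma(Q)$ with $c$ depending only on $[\sigma]_{A_\infty}$ and $n$. Without this input your argument would make the $A^\alpha_{p,q}$-type condition alone sufficient, which is known to fail in general.

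\textbf{Two further repairs.}
\begin{enumerate}
\item Duality: to compute $\|I_\alpha f\|_{L^q(u)}$ against $\|h\|_{L^{q'}}\le 1$ you must pair with $hu^{1/q}$, not $hu^{1/q'}$.
\item No H\"older inside the averages. Passing to $L^p(\sigma)$-averages leads, after summation, to $\int F^p\sum_{Q\in\mathcal S}\mathbf 1_Q$, which is unbounded because sparse families have unbounded overlap. $M^{\mathcal D}_\sigma$ is bounded on $L^p(\sigma)$ only when applied to $L^1(\sigma)$-averages.
\end{enumerate}

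\textbf{The corrected chain.} Set $\phi:=F\sigma^{-1/p}$ and $g:=hu^{-1/q'}$, so that $\|\phi\|_{L^p(\sigma)}=\|F\|_{L^p}$ and $\|g\|_{L^{q'}(u)}=\|h\|_{L^{q'}}$. Write $\langle\phi\rangle^\sigma_Q:=\sigma(Q)^{-1}\int_Q\phi\,\sigma$. Each sparse term then equals
\[
|Q|^{\Theta_{p,q}(\alpha)/n}\langle u\rangle_Q^{1/q}\langle\sigma\rangle_Q^{1/p'}\cdot\langle\phi\rangle^\sigma_Q\,\sigma(Q)^{1/p}\cdot\langle g\rangle^u_Q\,u(Q)^{1/q'} .
\]
Now apply H\"older in $\ell^p\times\ell^{p'}$ and the embedding $\ell^{q'}\subset\ell^{p'}$ (this is where $p\le q$ enters). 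Then use the Carleson property above to replace $\sigma(Q),u(Q)$ by $\sigma(E_Q),u(E_Q)$ with $E_Q$ disjoint. This gives
\[
\Lambda\lesssim\|w\|_{L^{n/\Theta_{p,q}(\alpha),\infty}}\,\|M^{\mathcal D}_\sigma\phi\|_{L^p(\sigma)}\,\|M^{\mathcal D}_u g\|_{L^{q'}(u)}\lesssim\|w\|_{L^{n/\Theta_{p,q}(\alpha),\infty}}\,\|F\|_{L^p}\|h\|_{L^{q'}},
\]
with constants depending only on $n,\alpha,p,q,[u]_{A_\infty},[\sigma]_{A_\infty}$. With these corrections your route becomes a complete proof.
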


\begin{remark} The proofs of Theorems \ref{thm:indices:SW}, \ref{thm:gamma:1:2:delta:1:2:p=q},  \ref{thm:indices:SW:q=p*}, and \ref{thm:app:q=p*:E}  will rely on Theorems~\ref{thm:SW:Ainf:p:q:alpha0=0} and \ref{thm:SW:Ainf:p:q} as well as on the pointwise estimate (see \cite[Lemma 7.14]{gt})
\begin{equation}\label{f:I1:grad:f}
|f(x)| \leq \frac{1}{n\omega_n} I_1(|\nabla f| )(x), \quad \forall f \in C_0^1(\rn), x \in \rn,
\end{equation}
where $\omega_n$ denotes the Euclidean measure of a unit ball in $\rn$.

Although we focus on Hardy-Sobolev inequalities, weighted Poincar\'e-type inequalities can be obtained as well by using that for a convex set $S \subset \rn$ and $f \in C^1(S)$ we have
\begin{equation}\label{RF:P}
\bigg|f(x) - \fint_S f \bigg| \leq \frac{\diam{S}^n}{n |S|} I_1(|\nabla f| \chi_S )(x), \quad \forall x \in S,
\end{equation}
see for instance \cite[Lemma 7.16]{gt}. From \eqref{RF:P} and Theorems \ref{thm:SW:Ainf:p:q:alpha0=0} and \ref{thm:SW:Ainf:p:q} (always in the case $\alpha=1$), given a cube $Q \subset \rn$ the weighted inequality
\begin{equation*}
\left(\int_{Q} \bigg|f(x) - \fint_Q f \bigg|^q u(x) \, dx \right)^{1/q} \leq C \left(\int_{Q} |\nabla f(x)|^p v(x) \, dx \right)^{1/p}
\end{equation*}
holds true for every $f \in C^1(\rn)$ whenever a pair $(u,v)$, with $u, v^{-p'/p} \in \Ainf$, satisfies \eqref{u:1/q:C:v:1/p} or  \eqref{cond:u:v:Theta>0}. Moreover, for $0 < \alpha < n$, the pointwise inequality \eqref{f:I1:grad:f} can be replaced with the fractional-derivative identity
$$
f = I_\alpha((-\Delta)^{\alpha/2}(f))
$$
for every $f$ in the Schwartz class $\mathcal{S}(\rn)$ (see for instance \cite[p.117]{Stein}) and then Theorems \ref{thm:SW:Ainf:p:q:alpha0=0} and \ref{thm:SW:Ainf:p:q} yield
\begin{equation*}
\left(\int_{\rn} |f(x)|^q  u(x) \dx \right)^{1/q}\leq C  \left(\int_{\rn} |(-\Delta)^{\alpha/2}f(x)|^p v(x) \dx \right)^{1/p}, \quad \forall f \in \mathcal{S}(\rn),
\end{equation*}
 whenever a pair $(u,v)$, with $u, v^{-p'/p} \in \Ainf$, satisfies \eqref{u:1/q:C:v:1/p} or  \eqref{cond:u:v:Theta>0}.
\end{remark}

\subsection{Dimensions of sets} For $\delta > 0$, $E_\delta$ stands for the \emph{$\delta$-neighborhood of $E$}, that is,
\begin{equation*}
E_\delta := \{x \in \rn: \dist{x, E} < \delta\}.
\end{equation*}
Notice that, given $\delta > 0$, we have $|E_\delta| <\infty$ if and only if $E$ is bounded. 

\subsubsection{The upper Minkowski dimension of $E \subset \rn$}\label{secc:dim:M} Given a set $E \subset \rn$ and  $\delta > 0$, let $N(E, \delta)$ denote the smallest number of closed balls of radius $\delta$ required to cover $E$. In particular, given $\delta > 0$,  $N(E, \delta) < \infty$ if and only if  $E$ is bounded. The \emph{upper box-counting} or \emph{upper Minkowski} dimension of $E$ is defined as
\begin{equation}\label{def:dimM}
\overline{\rm{dim}_M}(E):=\limsup_{\delta \to 0^+} \frac{\log N(E, \delta)}{\log (1/\delta)},
\end{equation}
see for instance \cite[p.~43]{Falconer}. Equivalently, $\overline{\rm{dim}_M}(E)$ can be regarded as the infimum of the nonnegative $\lambda$'s such that there exists $C >0$ (which may depend on $\lambda$ and $E$) satisfying
\begin{equation}\label{E:delta:lambda}
|E_\delta| \leq  C \delta^{n-\lambda}, \quad \forall 0 < \delta < \diam{E},
\end{equation}
see for instance \cite[Remark 6.8]{ALMV}.  Notice that $\overline{\rm{dim}_M}(E) < \infty$ if and only if $E$ is bounded. In addition, we have $\overline{\rm{dim}_M}(E) = \overline{\rm{dim}_M}(\overline{E})$ for every $E \subset \rn$ by \cite[Proposition 3.4]{Falconer}. Let us record here a simple result concerning  $\overline{\rm{dim}_M}(E)$ that will be useful later on.

\begin{lem}\label{lemma:s>dimB(F)} Let  $\emptyset \neq E \subset \rn$ be a  bounded set and fix $0 < s < n$. If $\overline{\rm{dim}_M}(E) < s$, then there exists $C > 0$ such that
\begin{equation}\label{meas:F:delta}
|E_\delta| \leq C \delta^{n-s}, \quad \forall 0 < \delta < \diam{E}.
\end{equation}
\end{lem}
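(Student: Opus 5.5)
The plan is to use the characterization \eqref{E:delta:lambda}: $\overline{\rm{dim}_M}(E)$ is the infimum of the $\lambda \geq 0$ for which \eqref{E:delta:lambda} holds with some constant. The work is to turn this into a bound with the specific exponent $s$. Since $\overline{\rm{dim}_M}(E) < s$ and the value is an infimum, there is an admissible $\lambda$ with $\overline{\rm{dim}_M}(E) \le \lambda < s$. For it, $|E_\delta| \le C_\lambda \delta^{n-\lambda}$ for all $0<\delta<\diam{E}$.

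The key step is comparing the two powers of $\delta$. When $\delta \le 1$, we have $\delta^{n-\lambda} \le \delta^{n-s}$ because $n-\lambda > n-s$. When $1 < \delta < \diam{E}$ (only possible if $\diam{E}>1$), we use $\delta^{n-\lambda} = \delta^{n-s}\delta^{s-\lambda} \le \diam{E}^{s-\lambda}\delta^{n-s}$. Combining the two ranges gives \eqref{meas:F:delta} with $C = C_\lambda \max\{1, \diam{E}^{s-\lambda}\}$.

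Alternatively, one could argue directly from \eqref{def:dimM}. Take $\lambda \in (\overline{\rm{dim}_M}(E), s)$. Then $N(E,\delta) \le \delta^{-\lambda}$ for all $\delta < \delta_0$. Covering $E$ by $N(E,\delta)$ balls of radius $\delta$, the enlarged balls of radius $2\delta$ cover $E_\delta$, so $|E_\delta| \le \omega_n 2^n \delta^{n-\lambda}$. For $\delta_0 \le \delta < \diam{E}$, the trivial bound $|E_\delta| \le |E_{\diam{E}}| < \infty$ holds, since $E$ is bounded. Against this, $\delta^{n-s} \ge \min\{\delta_0^{n-s}, \diam{E}^{n-s}\}$ is bounded below on that range, which absorbs the finite quantity into the constant.

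The only delicate point is making sure the chosen exponent $\lambda$ actually admits a bound, so that the infimum is handled correctly. Picking $\lambda$ strictly between $\overline{\rm{dim}_M}(E)$ and $s$ avoids this, and no real obstacle is expected.
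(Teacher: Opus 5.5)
Your proposal is correct, and your main route differs from the paper's. You take the characterization \eqref{E:delta:lambda} as given and note that, on the bounded range $0<\delta<\diam{E}$, admissible exponents are upward closed. This follows from $\delta^{n-\lambda}\le \max\{1,\diam{E}^{s-\lambda}\}\,\delta^{n-s}$ there, which makes the lemma a two-line consequence.

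The paper instead works directly from the covering-number definition \eqref{def:dimM}, and its argument is essentially your alternative paragraph. It picks $a_0$ with $N(E,\delta)<\delta^{-s}$ for $\delta<a_0$, taking the exponent to be $s$ itself, so no intermediate $\lambda$ is needed. It covers $E_\delta$ by the doubled balls to get $|E_\delta|\le \omega_n 2^n\delta^{n-s}$. It handles $a_0\le\delta<\diam{E}$ with the trivial bound from $E_\delta\subset B(x_0,2\diam{E})$ and the factor $(\delta/a_0)^{n-s}\ge 1$.

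The trade-off: your first route is shorter, but it takes from the cited literature exactly the nontrivial direction of \eqref{E:delta:lambda}, namely that $\overline{\rm{dim}_M}(E)<\lambda$ forces $|E_\delta|\le C\delta^{n-\lambda}$. That direction is itself proved by the same covering estimate. The paper's proof is self-contained and in effect establishes that direction. (The paper uses the other direction of \eqref{E:delta:lambda} later, in the proof of Lemma~\ref{lemma:Muinf<n-dimM}.)

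One small detail in your alternative: take $\delta_0\le 1$. You need this so that $\log(1/\delta)>0$ when you turn the $\limsup$ bound into $N(E,\delta)\le\delta^{-\lambda}$. You also need it so that $\delta^{n-\lambda}\le\delta^{n-s}$ on $(0,\delta_0)$; you did not state this step there.
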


\begin{proof} Set $d_B:= \overline{\rm{dim}_M}(E)$ and $f(\delta):= \frac{\log N(E, \delta)}{\log (1/\delta)}$, then \eqref{def:dimM} means
$d_B = \inf\limits_{a>0} \sup f((0, a)).$ Put $\eps_0:= s - \overline{\rm{dim}_M}(E) >0$, then there exists $a_0 > 0$ (depending on $\eps_0$ and $E$) such that
$$
\sup f((0,a_0)) < d_B + \eps_0,
$$
that is, 
\begin{equation}\label{bound:f:delta}
f(\delta) < d_B + \eps_0, \quad \forall \delta \in (0, a_0).
\end{equation}
Now, given $\delta \in (0, \diam{E})$, let us cover $E$ with $N(E, \delta) < \infty$ closed balls of radius $\delta$, in particular, the set $\{x \in \rn: \dist{x, E} < \delta\}$ can be covered with $N(E, \delta)$ closed balls concentric with the first balls but with radius $2 \delta$. Hence,
$
|E_\delta| \leq \omega_n N(E, \delta)  (2\delta)^n,
$
with $\omega_n :=|B(0,1)|$. Now, if $\delta \in (0, a_0)$, then \eqref{bound:f:delta} implies
$$
\frac{\log N(E, \delta)}{\log (1/\delta)} < d_B + \eps_0 = s,
$$
which means
$
N(E, \delta) < \delta^{-s}
$
and then
$
|E_\delta| \leq \omega_n 2^n \delta^{n-s}.
$
If $\delta \geq a_0$, fix $x_0 \in E$ and use the inclusion
$
E_\delta \subset B(x_0, 2 \diam{E}),
$
so that
$$
|E_\delta | \leq \omega_n 2^n \diam{E}^n \leq  \omega_n 2^n \diam{E}^n\left(\frac{\delta}{a_0}\right)^{n-s}
$$
and \eqref{meas:F:delta} follows with $C = \omega_n 2^n  \max\{1,  \diam{E}^n/a_0^{n-s} \}$.  
\end{proof}

\subsubsection{The Assouad dimension of $E \subset \rn$}\label{sec:Assouad} The \emph{Assouad dimension of a set $E \subset \rn$}, denoted by ${\rm{dim}}_A(E)$, is defined as the infimum of $\lambda \geq 0$ such that there exists $C > 0$ with 
\begin{equation}\label{def:dimA:lambda}
N(E \cap B(x,R), r) \leq C \left(\frac{R}{r} \right)^\lambda, \quad \forall x \in E, 0 < r < R.
\end{equation}
Notice that if $0 < \diam{E} < \infty$ and if $\lambda \geq 0$ satisfies \eqref{def:dimA:lambda}, then by fixing an $x \in E$ and $R:=\diam{E}$ we get $E \cap B(x,R) = F$ as well as
$$
N(E, r) \leq C \diam{E}^{\lambda} r^{-\lambda}, \quad \forall 0 < r <  \diam{E},
$$
which together with \eqref{def:dimM} implies $\overline{{\rm{dim}}}_M(E) \leq \lambda$ and thus 
\begin{equation}\label{dimM<dimA}
\overline{{\rm{dim}}}_M(E) \leq {\rm{dim}}_A(E)
\end{equation}
whenever $E \subset \rn$ is bounded. For every $E \subset \rn$ we have ${\rm{dim}}_A(E) \leq n$ (see for instance \cite[Remark 10.18]{KLV}) and the \emph{Assouad co-dimension of $E$} is defined as $\underline{{\rm{codim}}}_A(E):= n -{\rm{dim}}_A(E)$. Equivalently, (see \cite[Lemma~3.4]{KLVu}) $\underline{{\rm{codim}}}_A(E)$ equals the supremum of $\nu \geq 0$ such that there exists $C > 0$ with
\begin{equation*}
\frac{|E_r \cap B(x,R)|}{|B(x,R)|} \leq C \left(\frac{R}{r} \right)^{- \nu},  \quad \forall x \in E, 0 < r < R.
\end{equation*}

\subsection{Distance weights and notions of porosity}\label{secc:distance:weight:porosity}

\cite{ALMV, IGV26, KLV, Pa-UT}

\subsubsection{Porous sets}\label{secc:porous:sets} Following \cite[Definition 10.11]{KLV}, a set $E \subset \rn$ is \emph{porous} if there exists $C > 0$ such that for every $x \in \rn$ and $r > 0$ there exists $y \in \rn$ satisfying $B(y, Cr) \subset B(x, r) \setminus E$. The following are equivalent (see for instance \cite[Theorem~10.25]{KLV}): 
\begin{enumerate}[(i)]
\item a nonempty set $E \subset \rn$ is porous,
\item ${\rm{dim}}_A(E) < n$. 
\end{enumerate}

\subsubsection{Weakly porous sets}\label{secc:WPS} In \cite{ALMV},  Anderson, Lehrb\"ack,  Mudarra, and V\"ah\"akangas introduced and developed the geometric concept of weak porosity for a nonempty set $E \subset \rn$. In terms of the weight $\dist{\cdot, E}$,  by \cite[Theorem~1.1]{ALMV} the following are equivalent:
\begin{enumerate}[(i)]
\item a nonempty set $E \subset \rn$ is weakly porous,
\item there exists $\alpha > 0$ such that $\dist{\cdot, E}^{-\alpha} \in A_1(\rn)$.
\end{enumerate}
The precise range of $\alpha$'s that make $\dist{\cdot, E}^{-\alpha} \in A_1(\rn)$ or $\dist{\cdot, E}^{-\alpha} \in A_p(\rn)$ for $1 < p < \infty$ for a weakly porous set $E \subset \rn$ relies on the Muckenhoupt exponent ${\rm{Mu}}(E)$ defined in  \cite[Definition 6.1]{ALMV} and which we are denoting as ${\rm{Mu}}_1(E)$ for the sake of consistency. As mentioned, ${\rm{Mu}}_1(E)$ refines $\underline{{\rm{codim}}}_A(E)$ in the sense that $E$ weakly porous if and only if ${\rm{Mu}}_1 (E) >0$ with ${\rm{Mu}}_1 (E) = \underline{{\rm{codim}}}_A(E)$ whenever $E$ is porous. Then, by \cite[Theorem~1.2]{ALMV}, given a weakly porous set $E \subset \rn$ we have
\begin{enumerate}[(i)]
\item $\dist{\cdot, E}^{-\alpha} \in A_1(\rn)$ if and only if $0 \leq \alpha <  {\rm{Mu}}_1(E)$, and
\item for $1 < p < \infty$, $\dist{\cdot, E}^{-\alpha} \in A_p(\rn)$ if and only if $(1-p) {\rm{Mu}}_1(E) < \alpha <  {\rm{Mu}}_1(E)$.
\end{enumerate}

\begin{remark}\label{rmk:v:beta:WP} Notice that given a weakly porous set $E \subset \rn$, $1 < p < \infty$, and $\beta > 0$ with $\beta p'/p < {\rm{Mu}}_1(E)$, by setting $v(x):= \dist{x, E}^{\beta}$ we have $v^{-p'/p} =\dist{\cdot, E}^{-\beta p'/p} \in A_1(\rn)$ and consequently $v \in RH_\infty(\rn)$ due to \eqref{RHinfty:powers} and Remark \ref{comments:A1:RHinfty}. 
\end{remark}

\subsubsection{Median porous sets} Recently, in \cite{Pa-UT}, M.~Pasquariello and I.~Uriarte--Tuero and, independently, I.~G\'omez-Vargas \cite{IGV26} extended the notion of weak porosity, by introducing Muckenhoupt exponents $\Mu_p(E)$ for $1 < p \leq \infty$ for a nonempty set $E \subset \rn$. In particular, $E$ is called median porous  if and only if ${\rm{Mu}}_\infty (E) >0$ where ${\rm{Mu}}_p (E) = \underline{{\rm{codim}}}_A(E)$ for every $1 \leq p \leq \infty$ whenever $E$ is weakly porous. Now, by \cite[Theorem~8.7 and Corollary 8.10]{Pa-UT}, given a nonempty set $E \subset \rn$ we have
$$
\dist{\cdot, E}^{-\beta} \in \Ainf \Leftrightarrow - \sup\limits_{p>1}(p-1) \Mu_{p'}(E) < \beta < \Muinf(E)
$$
and the following are equivalent:
\begin{enumerate}[(i)]
\item $E$ is median porous,
\item there exists $\beta > 0$ such that $\dist{\cdot, E}^{-\beta} \in \Ainf$,
\item $\Muinf(E) >0$,
\item $\Mu_p >0$ for some $1 < p < \infty$,
\item $\Mu_p > 0$ for every $1 < p < \infty$.
\end{enumerate}
Let us set
$\ell_\infty(E):= - \sup\limits_{p>1}(p-1) \Mu_{p'}(E) 
$ and
\begin{equation}\label{def:I:infty}
I_\infty(E):= (\ell_\infty(E),  \Muinf(E)). 
\end{equation}
That is, $\dist{\cdot, E}^{-\beta} \in \Ainf$ if and only if $\beta \in I_\infty(E).$ Also, by \cite[Corollary 8.10.3]{Pa-UT} it follows that $\ell_\infty(E) < 0$ whenever $\Muinf(E) >0$. Also, by \cite[Corollary 8.10]{Pa-UT}, if $E \subset \rn$ then $\Muinf(E) \leq n$ and $\Muinf(E) := \sup\limits_{1 < q < \infty} \Mu_q(E).$ Moreover, by \cite[Corollary 8.10]{Pa-UT}, if $E$ is weakly porous, then it is median porous with $\Muinf(E)=\Mu_p(E) > 0$ for every $1 \leq p < \infty$.  The next lemma guarantees that $\overline{{\rm{dim}}}_M(E) < n$ for every bounded, median porous set  $E \subset \rn$.

\begin{lem}\label{lemma:Muinf<n-dimM} Given a nonempty bounded set $E \subset \rn$, we have
\begin{equation}\label{Muinf<n-dimM}
 \Muinf(E) \leq n - \overline{{\rm{dim}}}_M(E). 
\end{equation}
\end{lem}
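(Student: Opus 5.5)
We will argue by contradiction, using the characterization of $\Ainf$ membership of distance weights together with the fact that an $\Ainf$ weight is locally integrable. If $\Muinf(E)=0$ there is nothing to prove, since $\overline{{\rm{dim}}}_M(E)\le n$ always holds for bounded $E$. So assume $\Muinf(E)>0$ and fix any $\beta$ with $0<\beta<\Muinf(E)$. Then $\beta\in I_\infty(E)$, so $w:=\dist{\cdot,E}^{-\beta}\in\Ainf$; in particular $w\in L^1_{\rm loc}(\rn)$. It therefore suffices to show
\[
\overline{{\rm{dim}}}_M(E)\le n-\beta \quad\text{for every } 0<\beta<\Muinf(E),
\]
and then let $\beta\uparrow\Muinf(E)$.

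Local integrability alone does not give the needed neighborhood estimate, so we use the $A_\infty$ structure. Fix a cube $Q_0$ containing $E_{\diam{E}}$ with side comparable to $\diam{E}$. Since $w\in RH_s(\rn)$ for some $s>1$, Hölder's inequality gives the standard comparison for measurable $S\subset Q_0$:
\[
w(S)\le C\,w(Q_0)\Bigl(\tfrac{|S|}{|Q_0|}\Bigr)^{1/s'} .
\]
More directly, on $S=E_\delta$ we have $w\ge \delta^{-\beta}$, so $w(E_\delta)\ge\delta^{-\beta}|E_\delta|$. Combining the two estimates yields
\[
\delta^{-\beta}|E_\delta|\le C\,|E_\delta|^{1/s'},
\qquad\text{hence}\qquad |E_\delta|\lesssim \delta^{\beta s'} .
\]
This gives only the exponent $\beta s'$ rather than $\beta$, which is too weak. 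The main obstacle is therefore to reach the sharp exponent $n-\lambda$ with $\lambda$ arbitrarily close to $n-\beta$.

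To fix this, we exploit the open-endedness of $I_\infty(E)$. Since $\beta'\in I_\infty(E)$ for every $\beta'<\Muinf(E)$, it suffices to obtain $|E_\delta|\lesssim\delta^{\beta'-\epsilon}$ for $\beta'$ close to $\Muinf(E)$. The natural route is a dyadic-shell argument that uses only integrability. Take $\beta'\in(\beta,\Muinf(E))$ and set $w'=\dist{\cdot,E}^{-\beta'}$, which is integrable on $Q_0$. Then for every $\delta<\diam{E}$,
\[
\delta^{-\beta'}|E_\delta|\le\int_{E_\delta}w'\le \int_{Q_0}w'<\infty,
\]
so $|E_\delta|\le C\delta^{\beta'}$. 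In fact this is the simpler and sufficient argument: the reverse Hölder step is unnecessary, because local integrability of $\dist{\cdot,E}^{-\beta'}$ for every $\beta'<\Muinf(E)$ already gives $|E_\delta|\le C\delta^{n-(n-\beta')}$.

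By \eqref{E:delta:lambda}, this estimate means $\overline{{\rm{dim}}}_M(E)\le n-\beta'$. Letting $\beta'\uparrow\Muinf(E)$ gives \eqref{Muinf<n-dimM}.

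The only delicate point is the equivalence between the neighborhood-volume bound and the upper Minkowski dimension. For that we rely on the characterization \eqref{E:delta:lambda} cited from \cite[Remark 6.8]{ALMV}: \eqref{E:delta:lambda} with $\lambda=n-\beta'$ places $n-\beta'$ in the set whose infimum is $\overline{{\rm{dim}}}_M(E)$. If one prefers to avoid that citation, the bound $N(E,\delta)\lesssim \delta^{-n}|E_\delta|$ follows from a packing argument with disjoint balls $B(x_i,\delta/2)\subset E_\delta$ centered at a maximal $\delta$-separated subset of $E$.
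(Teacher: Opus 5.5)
Your final argument is correct and is essentially the paper's proof: $\dist{\cdot,E}^{-\beta'}\in\Ainf\subset L^1_{\rm loc}(\rn)$ for $0<\beta'<\Muinf(E)$, the pointwise bound $\dist{\cdot,E}^{-\beta'}\geq\delta^{-\beta'}$ on $E_\delta$ gives $|E_\delta|\le C\delta^{\beta'}$, and \eqref{E:delta:lambda} then yields $\overline{{\rm{dim}}}_M(E)\le n-\beta'$ (the paper obtains the same Chebyshev bound via \v{Z}ubrini\'c's identity \eqref{lemma:Zu}, which is more than is needed). You should delete the reverse H\"older detour: it is unnecessary, and its conclusion is miscomputed, since $\delta^{-\beta}|E_\delta|\le C|E_\delta|^{1/s'}$ actually gives $|E_\delta|\lesssim\delta^{\beta s}$ with $s>1$, which is stronger than the bound you need, not weaker.
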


\begin{proof} If $\Muinf(E) =0$ then there is nothing to prove due to \eqref{dimM<dimA} and the aforementioned fact that ${\rm{dim}}_A(E) \leq n$ for every $E \subset \rn$. If $\Muinf(E)  > 0$, let us take $\alpha \in (0, \Muinf(E))$ which makes for $\dist{\cdot, E}^{-\alpha} \in \Ainf \subset L^1_{{\rm{loc}}}(\rn)$. Next, by \cite[Lemma~3]{Zu}, given a set $A \subset \rn$, an open set $\Omega \subset \rn$,  and $\gamma >0$ we have
\begin{equation}\label{lemma:Zu}
\int_{A_r \cap \Omega} \dist{x, A}^{-\gamma} dx = r^{-\gamma} |A_r \cap \Omega| + \gamma \int_0^r |A_t \cap \Omega| t^{-\gamma -1} dt, \quad \forall r > 0. 
\end{equation}
Now, for a bounded set $E \subset \rn$ and $0 < r < 1$ let us choose $A:=E$, $\Omega := E_1$ so that $E_r \cap \Omega = E_r$, and $\gamma := \alpha$ so that $\dist{\cdot, E}^{-\alpha} \in L^1_{{\rm{loc}}}(\rn)$. The identity \eqref{lemma:Zu} then yields
$$
r^{-\alpha} |E_r| \leq \int_{E_1} \dist{x, A}^{-\alpha} dx =: \eta < \infty.
$$
Define $\lambda:= n -\alpha$ (which is positive since $\alpha < \Muinf(E) \leq n$) so that $|E_r| \leq \eta r^{n-\lambda}$ for every $0 < r < 1$, and from the definition of $\overline{{\rm{dim}}}_M(E)$ in \eqref{E:delta:lambda} it follows that $\overline{{\rm{dim}}}_M(E) \leq \lambda$, that is, $\alpha \leq n - \overline{{\rm{dim}}}_M(E)$. Finally, letting $\alpha \to \Muinf(E)$ proves \eqref{Muinf<n-dimM}. 
\end{proof}

\section{On a function $g_E \in L^{1, \infty}(\rn)$}\label{sec:gE} 

In this section we introduce and study a function $g_E$ associated to a bounded set $E \subset \rn$ and characterize the condition $g_E \in L^{1, \infty}(\rn)$ in terms of measure-theoretic properties of $E$, this characterization plays a central role in the construction of examples from Section~\ref{sec:proofs:dist:E}. 

\begin{lem}\label{lemma:d:x:F:L1w} Fix $0 \leq s <n$ and a set $E \subset \rn$ with $0< \diam{E} < \infty$ and $|\overline{E}|=0$. For a.e. $x \in \rn$ define
$$
g_E(x) :=   \left\{
      \begin{array}{cl}
       \left(\frac{\dist{x, E}}{\diam{E}}\right)^{-(n-s)}  &  \text{if } \dist{x, E} < \diam{E} \\
	\left(\frac{\dist{x, E}}{\diam{E}}\right)^{-n}  &  \text{if } \dist{x, E} \geq \diam{E}.    
\end{array}
    \right. 
$$ 
Then, if there exists $K > 0$ such that 
\begin{equation}\label{fat:r:n:s:K}
|E_r| \leq K r^{n-s}, \quad \forall 0 < r < \diam{E},
\end{equation}
it follows that $g_E \in L^{1,\infty}(\rn)$ with
\begin{equation}\label{norm:gF:L1w}
\norm{g_E}{L^{1,\infty}(\rn)} \leq K \diam{E}^{n-s}+ \omega_n 2^n \diam{E}^n.
\end{equation}
Conversely, if $g_E \in L^{1,\infty}(\rn)$, then \eqref{fat:r:n:s:K} holds with $K:=\norm{g_E}{L^{1,\infty}(\rn)}/\diam{E}^{n-s}.$
\end{lem}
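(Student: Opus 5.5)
The plan is to compute the distribution function of $g_E$ directly. The key observation is that $g_E$ depends only on $\dist{x,E}$ through a strictly decreasing profile. Write $D:=\diam{E}$ and define $\varphi:(0,\infty)\to(0,\infty)$ by $\varphi(d):=(d/D)^{-(n-s)}$ for $d<D$ and $\varphi(d):=(d/D)^{-n}$ for $d\geq D$. Since $n-s>0$, $\varphi$ is continuous (both branches equal $1$ at $d=D$), strictly decreasing, and maps $(0,\infty)$ onto $(0,\infty)$. Then $g_E(x)=\varphi(\dist{x,E})$ whenever $\dist{x,E}>0$. The set $\{\dist{\cdot,E}=0\}=\overline{E}$ is null by hypothesis, so for every $t>0$
$$
\{x: g_E(x)>t\} = E_{r(t)}\setminus \overline{E}, \qquad r(t):=\varphi^{-1}(t),
$$
and hence $|\{g_E>t\}| = |E_{r(t)}|$. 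Throughout I use $\norm{g}{L^{1,\infty}(\rn)}=\sup_{t>0} t\,|\{|g|>t\}|$.

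For the direct implication I split into the two regimes $t>1$ and $t\leq 1$.

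When $t>1$ we have $r(t)=D\,t^{-1/(n-s)}\in(0,D)$. Hypothesis \eqref{fat:r:n:s:K} then gives
$$
|\{g_E>t\}|\leq K\,r(t)^{n-s}=K D^{n-s}t^{-1}.
$$

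When $0<t\leq 1$ we have $r(t)=D\,t^{-1/n}\geq D$. Fixing $x_0\in E$, the inclusion $E_{r}\subset B(x_0, r+D)\subset B(x_0,2r)$ holds for $r\geq D$, so
$$
|\{g_E>t\}|\leq \omega_n 2^n r(t)^n=\omega_n 2^n D^n t^{-1}.
$$

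In either case $t|\{g_E>t\}|\leq K D^{n-s}+\omega_n2^nD^n$, which is \eqref{norm:gF:L1w}.

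For the converse, fix $0<r<D$ and choose $t:=(r/D)^{-(n-s)}>1$, so that $r(t)=r$. The identity above together with Chebyshev's inequality in weak form yields
$$
|E_r|=|\{g_E>t\}|\leq \norm{g_E}{L^{1,\infty}(\rn)}\,t^{-1}=\frac{\norm{g_E}{L^{1,\infty}(\rn)}}{D^{n-s}}\,r^{n-s},
$$
which is \eqref{fat:r:n:s:K} with the stated $K$.

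There is no real obstacle here. The only points needing care are verifying the monotonicity and continuity of $\varphi$ at $d=D$, which justifies that the superlevel sets are exactly neighborhoods $E_r$, and using $|\overline{E}|=0$ to discard the set where $g_E$ is undefined.
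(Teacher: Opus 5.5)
Your proof is correct and follows essentially the same route as the paper: the hypothesis \eqref{fat:r:n:s:K} controls the superlevel sets at small scales, a ball of twice the radius controls the large neighborhoods $E_r$ with $r\geq\diam{E}$, and the converse is the identical Chebyshev-type argument with $t=(r/\diam{E})^{-(n-s)}$. The only difference is organizational. You identify $\{g_E>t\}$ exactly with $E_{r(t)}\setminus\overline{E}$ and split on $t$, whereas the paper splits the set into $\{\dist{x,E}<\diam{E}\}$ and its complement. Your way gives the maximum rather than the sum of the two constants, and it never invokes \eqref{fat:r:n:s:K} at the endpoint $r=\diam{E}$, which the paper's choice $r=\diam{E}\min\{t^{-1/(n-s)},1\}$ implicitly does.
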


\begin{proof} For $t > 0$ let us write
\begin{align*}
|\{x \in \rn:  g_E(x) > t\}| &  = |\{x \in \rn:  \dist{x, E} < \diam{E} \text{ and } g_E(x) > t\}|\\
& \quad + |\{x \in \rn:  \dist{x, E} \geq \diam{E} \text{ and } g_E(x) > t\}|  =: I + II. 
\end{align*}
By setting $r:= \diam{E} \min \{t^{-\frac{1}{n-s}}, 1\}$ and using it in \eqref{fat:r:n:s:K} we get
$$
I = |\{x \in \rn:  \dist{x, E} < r\} | \leq K \diam{E}^{n-s} t^{-1}.
$$
On the other hand, by defining $R:= \diam{E}/t^{1/n}$, we can write 
$$
II = |\{x \in \rn:  \diam{E} \leq \dist{x, E} < R\}|
$$
and notice that, for any fixed $x_E \in E$, the inclusion
\begin{equation}\label{incl:<R:B2R}
\{x \in \rn:  \diam{E} \leq \dist{x, E} < R\} \subset \overline{B(x_E, 2R)}
\end{equation}
holds true, since given $x \in \rn$ with $\diam{E} \leq \dist{x, E} < R$ and $\eps > 0$ let $y_E \in E$ satisfy $|x-y_E| \leq \dist{x, E} + \eps$ and then
$$
|x - x_E| \leq |x - y_E| + |y_E - x_E| \leq \dist{x, E} + \diam{E} +\eps < R + \diam{E} +\eps < 2 R +\eps,
$$ 
and we obtain \eqref{incl:<R:B2R} by letting $\eps \to 0$. Consequently, 
$$
II \leq |B(0,2)| R^n = |B(0,2)| \diam{E}^n/t
$$
and \eqref{norm:gF:L1w} follows. Conversely,   $g_E \in L^{1,\infty}(\rn)$ means
\begin{equation}\label{hyp:gF:L1w}
t |\{x \in \rn:  g_E(x) > t\}| \leq \norm{g_E}{L^{1,\infty}(\rn)}, \quad \forall t >0
\end{equation}
and, given $0 < r \leq \diam{E}$, for $x \in \rn$ with $\dist{x, E} < r$ we finally get $\dist{x, E} < \diam{E}$ and then $g_E(x) =  \left(\frac{\dist{x, E}}{\diam{E}}\right)^{-(n-s)}$. By using \eqref{hyp:gF:L1w} with $t:= (r/\diam{E})^{-(n-s)}$,
\begin{align*}
|\{x \in \rn: \dist{x, E} < r\}| &  \leq |\{x \in \rn: \diam{E} g_E(x)^\frac{1}{-(n-s)} < r\}| \\
& = |\{x \in \rn: g_E(x) > (r/\diam{E})^{-(n-s)}\}\\
& \leq \norm{g_E}{L^{1,\infty}(\rn)} \left(\frac{r}{\diam{E}} \right)^{n-s}
\end{align*}
which proves \eqref{fat:r:n:s:K} with $K:= \norm{g_E}{L^{1,\infty}(\rn)}/\diam{E}^{n-s}$. 
\end{proof}

\section{Proofs of Theorems \ref{thm:indices:SW}, \ref{thm:gamma:1:2:delta:1:2:p=q},  \ref{thm:indices:SW:q=p*}, \ref{thm:app:q=p*:E} and Corollaries \ref{coro:v=1:p<q<p*}, \ref{coro:beta:1:2:critical}}\label{sec:proofs:dist:E}

The proofs are based on the function $g_E$ from Section~\ref{sec:gE} and on the right choice of indices $\gamma_1, \gamma_2,\delta_1, \delta_2 \in \re$ so that the pair $(u,v):=(u_{\gamma_1, \gamma_2},u_{\delta_1, \delta_2})$ satisfies either the condition \eqref{cond:u:v:Theta>0} from Theorem \ref{thm:SW:Ainf:p:q} (when $\Theta_{p,q}(\alpha) > 0$)  or \eqref{u:1/q:C:v:1/p} from Theorem \ref{thm:SW:Ainf:p:q:alpha0=0} (when $\Theta_{p,q}(\alpha) = 0$), whenever $E \subset \rn$ is a bounded, median porous set. Recall that $u_{\gamma_1, \gamma_2}$ is defined for $x \in \rn \setminus E$ as in \eqref{def:u:gamma:1:2:c} and that it can be rewritten as \eqref{def:u:F}.

\medskip

\emph{Proof of Theorem \ref{thm:indices:SW}.} Fix $1 < p \leq q < \infty$ and $0 < \alpha < n$ such that $\Theta_{p,q}(\alpha) > 0$ (always with $\Theta_{p,q}(\alpha)$ as in  \eqref{def:alpha0}). Let $E \subset \rn$ be a median porous set with $0 < \diam{E} < \infty$. In particular, $\overline{\rm{dim}_M}(E) <~n$ (due to \eqref{Muinf<n-dimM}) and $\dist{\cdot, E}^{-\vartheta} \in \Ainf$ if and only if $\vartheta  \in I_\infty(E)$ as defined in \eqref{def:I:infty}. Next consider exponents $\gamma_1, \gamma_2$ as in \eqref{cond:gamma:1:2}. Notice that the local integrability of $\dist{\cdot, E}^{-\vartheta}$ for $\vartheta \in I_\infty(E)$ along with the fact that $\dist{\cdot, E} = \dist{\cdot, \overline{E}}$ implies $|\overline{E} |=0$ and then $u_{\gamma_1, \gamma_2}(x)$ is well defined for a.e. $x \in \rn$.  Now, the hypothesis \eqref{cond:gamma:1:2} means precisely $\dist{\cdot, E}^{\gamma_1}, \dist{\cdot, E}^{\gamma_2} \in \Ainf$ which together with \eqref{def:u:F} and the fact (mentioned in Section~\ref{sec:Ap:weights}) that $\Ainf$ forms a lattice yields $u_{\gamma_1, \gamma_2} \in~\Ainf$. Similarly,  $\delta_1, \delta_2$ as in \eqref{cond:delta:1:2} implies $u_{\delta_1, \delta_2}^{-p'/p} \in \Ainf$. Next, we use \eqref{def:u:F}  to write
$$
\left(\frac{u_{\gamma_1, \gamma_2}(x)^\frac{1}{q}}{u_{\delta_1, \delta_2}(x)^\frac{1}{p}}\right)^\frac{n}{\Theta_{p,q}(\alpha)} = \left\{
      \begin{array}{cl}
       \left(\frac{\dist{x, E}}{\diam{E}}\right)^{\frac{n}{\Theta_{p,q}(\alpha)}(\frac{\gamma_1}{q} - \frac{\delta_1}{p})}  &  \text{if } \dist{x, E} < \diam{E} \\
	\left(\frac{\dist{x, E}}{\diam{E}}\right)^{\frac{n}{\Theta_{p,q}(\alpha)}(\frac{\gamma_2}{q} - \frac{\delta_2}{p})}  &  \text{if } \dist{x, E} \geq \diam{E}, 
\end{array}
    \right. 
$$
and by choosing $\gamma_1, \gamma_2, \delta_1, \delta_2$ such that
\begin{equation}\label{cond:gammas:deltas}
\left\{
      \begin{array}{rl}
       \frac{n}{\Theta_{p,q}(\alpha)}\left(\frac{\gamma_1}{q} - \frac{\delta_1}{p}\right) &= - (n-s) \\
	\frac{n}{\Theta_{p,q}(\alpha)}\left(\frac{\gamma_2}{q} - \frac{\delta_2}{p}\right) & = -n, 
\end{array}
    \right. 
\end{equation}
for some $s \in (\overline{\rm{dim}_M}(E), n)$ it follows that $\left(\frac{u_{\gamma_1, \gamma_2}^{1/q}}{u_{\delta_1, \delta_2}^{1/p}}\right)^{n/\Theta_{p,q}(\alpha)} = g_E$, with $g_E$ as in Lemma~\ref{lemma:d:x:F:L1w} satisfying 
$$
\norm{g_E}{L^{1, \infty}(\rn)} \leq K \diam{E}^{n-s}+ \omega_n 2^n \diam{E}^n
$$ 
for some $K > 0$ depending on $s$, $n$ and $E$, due to Lemma~\ref{lemma:s>dimB(F)} (since $\overline{\rm{dim}_M}(E) < s < n$). Consequently, 
$$
 \norm{u^{1/q}/v^{1/p}}{L^{n/\Theta_{p,q}(\alpha), \infty}(\rn)}  =  \norm{g_E}{L^{1, \infty}(\rn)}^{\Theta_{p,q}(\alpha)/n}  \leq (K \diam{E}^{n-s}+ \omega_n 2^n \diam{E}^n)^{\Theta_{p,q}(\alpha)/n}. 
$$
Finally, by solving for $s$ from the first equality in \eqref{cond:gammas:deltas}, the inequalities $\overline{\rm{dim}_M}(E) < s < n$ amount to \eqref{cond:s:dimB:n}, while the second equality in \eqref{cond:gammas:deltas} means \eqref{cond:gamma2:delta2}. Thus the conditions $u_{\gamma_1, \gamma_2}, u_{\delta_1, \delta_2}^{-p'/p} \in \Ainf$ and $u_{\gamma_1, \gamma_2}^{1/q}/u_{\delta_1, \delta_2}^{1/p} \in L^{n/\Theta_{p,q}(\alpha), \infty}(\rn)$ in Theorem~\ref{thm:SW:Ainf:p:q} are met and \eqref{bound:T:alpha:SW} follows from it. \qed

\medskip

\emph{Proof of Corollary \ref{coro:v=1:p<q<p*}.} Since $\delta_1=\delta_2 = 0$, the condition $\delta_j p'/p \in I_\infty(E)$ holds trivially for $j=1,2$ (since $ \ell_\infty(E) < 0 < \Muinf(E)$), also $u_{\delta_1, \delta_2} \equiv 1$. Then \eqref{cond:gamma:1:2}, \eqref{cond:delta:1:2},  \eqref{cond:s:dimB:n}, and \eqref{cond:gamma2:delta2} reduce to
$$
\left\{
      \begin{array}{rl}
     -\gamma_j  &\in I_\infty(E) \quad \text{for } j=1,2\\[2 mm]
      \overline{\rm{dim}_M}(E) - n & < \frac{n}{q \Theta_{p,q}(1)} \gamma_1 <0  \\[2 mm]
	\gamma_2 & = - q \Theta_{p,q}(1)= \frac{q}{p}(n-p) -n <0,
\end{array}
    \right. 
$$
which are equivalent to \eqref{cond:gamma:1:2:delta=0:a} and \eqref{cond:gamma:1:2:delta=0:b}. Notice that the second inequality for $\gamma_2$ in \eqref{cond:gamma:1:2:delta=0:b}, along with the fact that $\Muinf(E) \leq n$ by \cite[Corollary 8.10.6]{Pa-UT}, imposes $p < n$, while the first one makes $q < np/(n-p)$. Finally, since $\gamma_1 > \gamma_2 \left(1- \frac{\overline{\rm{dim}_M}(E)}{n} \right)$, with $\gamma_2 < 0$, it follows that $\gamma_1 > \gamma_2$ and \eqref{def:u:gamma:1:2:c} yields
$$
u_{\gamma_1, \gamma_2}(x) =\min \left\{ \left(\frac{\dist{x, E}}{\diam{E}}\right)^{\gamma_1}, \left(\frac{\dist{x, E}}{\diam{E}}\right)^{\gamma_2} \right\}.  
$$
Finally, the weighted Hardy-Sobolev inequality \eqref{Hardy:p:q:E:q<p*} follows from \eqref{I1:v=1:p<q<p*}  and \eqref{f:I1:grad:f}.  \qed

\medskip

\emph{Proof of Theorem \ref{thm:gamma:1:2:delta:1:2:p=q}.} In this case $\alpha=\Theta_{p,q}(\alpha) =1$ (in particular, $n > 1$) and \eqref{cond:gamma:1:2}, \eqref{cond:delta:1:2},  \eqref{cond:s:dimB:n}, and \eqref{cond:gamma2:delta2} reduce to \eqref{cond:gamma:1:2:delta:1:2:p=q} and therefore \eqref{gamma:1:2:delta:1:2:p=q} follows from Theorem \ref{thm:indices:SW}. In turn, \eqref{gamma:1:2:delta:1:2:p=q}  and \eqref{f:I1:grad:f} yield \eqref{HS:Theta1=1}
\qed

\medskip

\emph{Proof of Theorem \ref{thm:indices:SW:q=p*}.} Having $\Theta_{p,q}(\alpha) =0$ means $q = np/(n-\alpha p)$. In this critical case our guide will be Theorem~\ref{thm:SW:Ainf:p:q:alpha0=0}. We will choose the indices $\gamma_1, \gamma_2, \delta_1, \delta_2$ such that $u^{1/q} \leq v^{1/p}$, always with the pair $(u,v):=(u_{\gamma_1, \gamma_2},u_{\delta_1, \delta_2})$. Indeed, by considering the cases $\dist{x, E} < \diam{E}$ and $\dist{x, E} \geq \diam{E}$ the inequality $u^{1/q} \leq v^{1/p}$ is achieved by choosing $\gamma_1, \gamma_2, \delta_1, \delta_2$ with $\gamma_1/q - \delta_1/p \geq 0$ and $\gamma_2/q - \delta_2/p \leq 0$, which amount to \eqref{gamma1:delta1:q=p*} and  \eqref{gamma2:delta2:q=p*}, and together with \eqref{cond:gamma:1:2} and \eqref{cond:delta:1:2} (always to guarantee  $u_{\gamma_1, \gamma_2}, u_{\delta_1, \delta_2}^{-p'/p} \in \Ainf$) as well as Theorem~\ref{thm:SW:Ainf:p:q:alpha0=0}, yield \eqref{bound:T:alpha:SW:2}.\qed

 \medskip
 
\emph{Proof of Corollary \ref{coro:beta:1:2:critical}.} Given $\beta_1, \beta_2$ verifying \eqref{coro:beta:1:2:a}, define $\gamma_1:= \beta_2$, $\gamma_2:=\beta_2$, $\delta_1:= \beta_1$ and $\delta_2:=\beta_1$. Then, due to \eqref{coro:beta:1:2:a}, the exponents  $\gamma_1, \gamma_2, \delta_1, \delta_2 \in \re$ satisfy \eqref{gamma1:delta1:q=p*} and \eqref{gamma2:delta2:q=p*}. Also, by \eqref{coro:beta:1:2:a} since $\gamma_1= \gamma_2= \beta_2 $ we get $-\gamma_1= -\gamma_2 \in  I_\infty(E)$ and the condition \eqref{cond:gamma:1:2} is met as is the condition \eqref{cond:delta:1:2} since $p' \beta_1 / p \in I_\infty(E)$ means precisely \eqref{coro:beta:2:a}. 
\qed

\medskip
 
\emph{Proof of Theorem \ref{thm:app:q=p*:E}} It follows from Theorem~\ref{thm:indices:SW:q=p*} applied with $\alpha =1$ and by noticing that the  conditions on $\gamma_1, \gamma_2, \delta_1, \delta_2$ coincide with \eqref{cond:gamma:1:2}, \eqref{cond:delta:1:2}, \eqref{gamma1:delta1:q=p*} and \eqref{gamma2:delta2:q=p*} when $\alpha =1$ and $q=np/(n-p)$. \qed

\section{Proof of Theorem \ref{thm:Hardy:ineq:p:q:WP:2}}\label{sec:proof:WP:2} We will make use of the following result from \cite{MaSo1}. 

\begin{thm}[Theorem 6.8 from \cite{MaSo1}]\label{thm:Mg:p<q:app} Fix $n > 1$ and $1 < p  \leq q < n/\Theta_{p,q}(1)$ with $\Theta_{p,q}(1) :=1 + n (1/q-1/p) > 0$. Given $v \in RH_\infty(\rn)$ such that $v^{-p'/p} \in \Ainf$  there exists $ C >0$ depending only on $n$, $p$, $q$, $[v^{-p/p}]_{\Ainf}$, and $[v]_{RH_\infty(\rn)}$ such that
$$
\left(\int_{\rn} |f(x)|^q  \mathcal{M}(g)(x)^{\Theta_{p,q}(1)q/n} v(x)^{q/p} \dx \right)^{1/q}\leq C \norm{g}{L^1(\rn)}^{\Theta_{p,q}(1)/n} \left(\int_{\rn} |\nabla f(x)|^p v(x) \dx \right)^{1/p}
$$
for every $f \in C_0^1(\rn)$ and $g \in L^1(\rn)$. 
\end{thm}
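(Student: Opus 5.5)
The plan is to reduce the statement to the subcritical two-weight estimate for $I_1$ in Theorem~\ref{thm:SW:Ainf:p:q} (with $\alpha=1$), using the pointwise bound \eqref{f:I1:grad:f}. Fix $g \in L^1(\rn)$ with $g \not\equiv 0$; the case $g\equiv 0$ is trivial. Set $\Theta:=\Theta_{p,q}(1)$ and define the pair
$$
u:=\mathcal{M}(g)^{\Theta q/n}\, v^{q/p}, \qquad v .
$$
By \eqref{f:I1:grad:f}, $|f| \leq (n\omega_n)^{-1} I_1(|\nabla f|)$. It therefore suffices to show that the pair $(u,v)$ satisfies the hypotheses of Theorem~\ref{thm:SW:Ainf:p:q}, with $[u]_{\Ainf}$ bounded independently of $g$, and with $\norm{u^{1/q}/v^{1/p}}{L^{n/\Theta,\infty}(\rn)} \lesssim \norm{g}{L^1(\rn)}^{\Theta/n}$.

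The Lorentz-norm condition is immediate. The ratio is $u^{1/q}/v^{1/p}=\mathcal{M}(g)^{\Theta/n}$, so
$$
\norm{u^{1/q}/v^{1/p}}{L^{n/\Theta,\infty}(\rn)}=\norm{\mathcal{M}(g)}{L^{1,\infty}(\rn)}^{\Theta/n}\leq C_n \norm{g}{L^1(\rn)}^{\Theta/n}
$$
by the weak-type $(1,1)$ bound for $\mathcal{M}$. The hypothesis $v^{-p'/p}\in\Ainf$ is given. What remains is $u\in\Ainf$ with a constant depending only on $n,p,q$ and $[v]_{RH_\infty}$; this is the main step.

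For that step I would use the Jones factorization direction $w_1 w_2^{1-r}\in A_r$ for $w_1,w_2\in A_1$, with $[w_1w_2^{1-r}]_{A_r}\le [w_1]_{A_1}[w_2]_{A_1}^{r-1}$.

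\emph{First factor.} The hypothesis $q<n/\Theta$ gives $\delta:=\Theta q/n<1$. By Coifman--Rochberg, $w_1:=\mathcal{M}(g)^{\delta}\in A_1$ with $[w_1]_{A_1}$ depending only on $n$ and $\delta$, and not on $g$.

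\emph{Second factor.} By \eqref{RHinfty:powers}, $v^{q/p}\in RH_\infty(\rn)$ with constant controlled by $[v]_{RH_\infty}$, $q/p$ and $n$. Remark~\ref{comments:A1:RHinfty} then gives $\theta_1>0$ with $w_2:=v^{-\theta_1 q/p}\in A_1$, again with controlled constant. Choosing $r:=1+1/\theta_1$ yields $v^{q/p}=w_2^{1-r}$, hence $u=w_1w_2^{1-r}\in A_r\subset\Ainf$ with the desired uniformity.

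Plugging these facts into Theorem~\ref{thm:SW:Ainf:p:q} gives
$$
\Bigl(\int_{\rn}|I_1(|\nabla f|)|^q u\Bigr)^{1/q}\le C\norm{g}{L^1(\rn)}^{\Theta/n}\Bigl(\int_{\rn}|\nabla f|^p v\Bigr)^{1/p}.
$$
Combined with \eqref{f:I1:grad:f}, this is the claim. The only delicate point is the uniformity in $g$ of $[u]_{\Ainf}$, and it is secured by the Coifman--Rochberg constant together with the explicit factorization bound above. One caveat is that $\mathcal{M}(g)$ is finite a.e. for $g\in L^1$, so $u$ is a genuine weight.
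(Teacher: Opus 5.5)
Your proof is correct. The paper gives no proof of this statement: it is imported from \cite{MaSo1} and used only as a black box in Section~\ref{sec:proof:WP:2}, so there is no in-paper argument to compare against.

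Your argument is the same mechanism the paper uses in its proof of Theorem~\ref{thm:indices:SW}. You check $u, v^{-p'/p}\in\Ainf$, compute $\norm{u^{1/q}/v^{1/p}}{L^{n/\Theta_{p,q}(1),\infty}(\rn)}$ as a power of an $L^{1,\infty}$ norm, apply Theorem~\ref{thm:SW:Ainf:p:q} with $\alpha=1$, and finish with \eqref{f:I1:grad:f}.

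It also accounts for both hypotheses of the statement:
\begin{itemize}
\item $q<n/\Theta_{p,q}(1)$ is exactly $\delta=\Theta_{p,q}(1)q/n<1$, which Coifman--Rochberg needs.
\item $v\in RH_\infty(\rn)$ is exactly what lets you write $v^{q/p}=w_2^{1-r}$ with $w_2\in A_1(\rn)$, via \eqref{RHinfty:powers} and Remark~\ref{comments:A1:RHinfty}.
\end{itemize}

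Two points are used tacitly, and both are standard:
\begin{itemize}
\item The constant $C_2$ in Theorem~\ref{thm:SW:Ainf:p:q} can be taken nondecreasing in $[u]_{\Ainf}$, so your $g$-independent bound on $[u]_{\Ainf}$ yields a $g$-independent $C$.
\item $[u]_{\Ainf}\le [u]_{A_r}$, which follows from Jensen's inequality with the exponential definition of the $A_\infty$ constant used in this paper.
\end{itemize}
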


In the case where $v$ is a power of $\dist{\cdot, E}$ for a weakly porous set $E \subset \rn$, we obtain the next corollary of Theorem \ref{thm:Mg:p<q:app}.

\begin{coro}\label{coro:Hardy:ineq:p:q:WP}  Fix $n > 1$ and $1 < p  \leq q < n/\Theta_{p,q}(1)$ with $\Theta_{p,q}(1) :=1 + n (1/q-1/p) > 0$. Given a nonempty weakly porous set $E \subset \rn$ and $\beta > 0$ with $\beta p'/p < {\rm{Mu}}_1(E)$, there exists $C > 0$, depending only on $\beta$, $p$, $q$, ${\rm{Mu}}_1(E)$, and $n$, such that
\begin{equation*}
\left(\int_{\rn} |f(x)|^q  \mathcal{M}(g)(x)^{\Theta_{p,q}(1)q/n} d_E(x)^{q \beta/p} \dx \right)^\frac{1}{q}\leq C \norm{g}{L^1(\rn)}^{\Theta_{p,q}(1)/n} \left(\int_{\rn} |\nabla f(x)|^p d_E(x)^{\beta} \dx \right)^\frac{1}{p}
\end{equation*}
for every $f \in C_0^1(\rn)$ and $g \in L^1(\rn)$. 
\end{coro}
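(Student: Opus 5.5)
The plan is to apply Theorem~\ref{thm:Mg:p<q:app} with $v(x):=d_E(x)^{\beta}$. The statement then follows once we check its two hypotheses on $v$ and confirm that the constant depends only on the listed quantities. The corollary's conclusion is literally the inequality of Theorem~\ref{thm:Mg:p<q:app} with $v^{q/p}=d_E^{q\beta/p}$ and $v=d_E^\beta$ substituted. So no further manipulation of $f$ or $g$ is needed.

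\textbf{Step 1 ($v^{-p'/p}\in\Ainf$).} Since $\beta>0$ and $0<\beta p'/p<{\rm{Mu}}_1(E)$, the characterization \cite[Theorem~1.2]{ALMV} recalled in Section~\ref{secc:WPS} (item (i)) gives $v^{-p'/p}=d_E^{-\beta p'/p}\in A_1(\rn)\subset\Ainf$. Its $A_1$ constant, and hence its $\Ainf$ constant, is controlled in terms of $\beta p'/p$, ${\rm{Mu}}_1(E)$ and $n$.

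\textbf{Step 2 ($v\in RH_\infty(\rn)$).} This is exactly Remark~\ref{rmk:v:beta:WP}. The argument runs as follows. Put $w:=v^{-p'/p}\in A_1(\rn)$. By Remark~\ref{comments:A1:RHinfty} there is $\theta_0>0$, depending on $[w]_{A_1}$ and $n$, with $w^{-\theta_0}=d_E^{\theta_0\beta p'/p}\in RH_\infty(\rn)$. Then \eqref{RHinfty:powers} with $\ell:=p/(\theta_0 p')>0$ yields $v=(w^{-\theta_0})^{\ell}\in RH_\infty(\rn)$, with $[v]_{RH_\infty}$ depending only on $[w^{-\theta_0}]_{RH_\infty}$, $\ell$ and $n$.

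\textbf{Step 3 (conclusion and constants).} With both hypotheses verified, Theorem~\ref{thm:Mg:p<q:app} gives the inequality for every $f\in C_0^1(\rn)$ and $g\in L^1(\rn)$. Its constant depends on $n,p,q,[v^{-p'/p}]_{\Ainf},[v]_{RH_\infty}$, which in turn depend only on $\beta,p,q,{\rm{Mu}}_1(E),n$.

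The main obstacle I expect is this bookkeeping of constants in Steps 1 and 2. One must make sure the $A_1$ constant of $d_E^{-\beta p'/p}$ is controlled by $\beta p'/p$, ${\rm{Mu}}_1(E)$ and $n$ alone, rather than by finer geometry of $E$. As a first attempt, I would rely on the quantitative form of \cite[Theorem~1.2]{ALMV}. If that is not available in this form, I would accept a dependence on $E$ through $[d_E^{-\beta p'/p}]_{A_1}$.
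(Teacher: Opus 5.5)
Your proof is exactly the paper's: take $v=d_E^{\beta}$ in Theorem~\ref{thm:Mg:p<q:app}, get $v^{-p'/p}\in A_1(\rn)$ from \cite[Theorem~1.2]{ALMV}, and get $v\in RH_\infty(\rn)$ via Remark~\ref{rmk:v:beta:WP}; your unpacking of that remark with $\ell=p/(\theta_0p')$ is correct. Your closing caveat is justified and should replace the claim in Step~1, since $[d_E^{-\beta p'/p}]_{A_1}$ is not controlled by $\beta p'/p$, ${\rm{Mu}}_1(E)$ and $n$ alone (for instance, the finite grids $r\mathbb{Z}^n\cap[0,L]^n$ all have ${\rm{Mu}}_1=n$, yet their $A_1$ constants grow like $(L/r)^{\beta p'/p}$), so this argument, yours or the paper's one-line version, only yields a constant that depends on $E$ through that quantity.
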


\begin{proof} A consequence of Theorem \ref{thm:Mg:p<q:app} by setting $v(x):= \dist{x, E}^{\beta}$ so that $v^{-p'/p} \in A_1(\rn)$ due to $0 < \beta p'/p < {\rm{Mu}}_1(E)$ and then $v \in RH_\infty(\rn)$ by Remark \ref{rmk:v:beta:WP}.
\end{proof}

The proof of Theorem \ref{thm:Hardy:ineq:p:q:WP:2} will also rely on the following lemma.  

\begin{lem}\label{lemma:M:d:gamma:dk} Fix a set $E \subset \rn$  with $0 < \diam{E} < \infty$, and for $\kappa > 0 $ let us write $d_{\kappa}:= \kappa \,  \diam{E}$. Then, for each $\gamma < n - \overline{{\rm{dim}}}_M(E)$ there exists $K > 0$ depending on $\gamma$, $\kappa$, $\diam{E}$, and $n$ such that
\begin{equation}\label{M:d:gamma:dk}
\mathcal{M}(\dist{\cdot, E}^{-\gamma} \chi_{E_{d_{\kappa}}}) (x) \geq   \left\{
      \begin{array}{cl}
       \dist{x, E}^{-\gamma}  &  \text{if } \dist{x, E} < d_{\kappa} \\
	K \, \dist{x, E}^{-n} &  \text{if } \dist{x, E} \geq d_{\kappa}, 
\end{array} \quad \text{for a.e. } x \in \rn. 
    \right. 
\end{equation}
\end{lem}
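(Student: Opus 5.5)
The plan is to split according to the two regimes in \eqref{M:d:gamma:dk}. Write $w:=\dist{\cdot, E}^{-\gamma}\chi_{E_{d_\kappa}}$.

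\emph{Step 1: $w \in L^1(\rn)$ and $|\overline{E}|=0$.} Since $\gamma < n - \overline{{\rm{dim}}}_M(E)$, I fix $s$ with $\overline{{\rm{dim}}}_M(E) < s < n$ and $\gamma < n-s$. Lemma~\ref{lemma:s>dimB(F)} then gives $|E_t| \leq C t^{n-s}$ for $0<t<\diam{E}$; for larger $t$ the same bound holds after enlarging $C$, since $E$ is bounded. In particular $|\overline{E}| \leq \lim_{t\to 0}|E_t| = 0$, so $w$ is defined a.e. If $\gamma>0$, I apply \eqref{lemma:Zu} with $A=E$, $\Omega=\rn$ and $r=d_\kappa$. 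This gives
\[
\int_{E_{d_\kappa}} \dist{x,E}^{-\gamma}\dx = d_\kappa^{-\gamma}|E_{d_\kappa}| + \gamma\int_0^{d_\kappa} |E_t|\, t^{-\gamma-1}\,dt ,
\]
and the right-hand side is finite because the integrand is $O(t^{n-s-\gamma-1})$ with $n-s-\gamma>0$. If $\gamma\le 0$, then $w$ is bounded by $d_\kappa^{-\gamma}$ on the bounded set $E_{d_\kappa}$. In either case $w \in L^1(\rn)$.

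\emph{Step 2: the near region $\dist{x,E}<d_\kappa$.} This set is exactly $E_{d_\kappa}$, which is open because the distance function is continuous. By the Lebesgue differentiation theorem, a.e. $x$ in it is a Lebesgue point of $w$. At such a point the averages of $w$ over small balls centered at $x$ tend to $w(x)=\dist{x,E}^{-\gamma}$. Hence $\mathcal{M}(w)(x) \geq \dist{x,E}^{-\gamma}$ a.e. there.

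\emph{Step 3: the far region $\dist{x,E}\ge d_\kappa$.} I fix $x_E\in E$ and choose $R := \dist{x,E}+2\diam{E}+d_\kappa$.
\begin{itemize}
\item Since $|x-x_E| \le \dist{x,E}+\diam{E}$ and $E_{d_\kappa}\subset B(x_E,\diam{E}+d_\kappa)$, we get $E_{d_\kappa}\subset B(x,R)$.
\item Using $\dist{x,E}\ge d_\kappa = \kappa\,\diam{E}$, we get $R \le c_\kappa \dist{x,E}$ with $c_\kappa := 1+(2+\kappa)/\kappa$.
\end{itemize}
It follows that
\[
\mathcal{M}(w)(x) \geq \frac{1}{\omega_n R^n}\int_{E_{d_\kappa}} \dist{y,E}^{-\gamma}\,dy \geq \frac{A}{\omega_n c_\kappa^n}\,\dist{x,E}^{-n},
\qquad A:=\int_{E_{d_\kappa}}\dist{y,E}^{-\gamma}\,dy .
\]

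\emph{Main obstacle: bounding $A$ from below with the stated dependence.} Positivity of $A$ is immediate, but $K$ must depend only on $\gamma,\kappa,\diam{E},n$, so $A$ needs an explicit lower bound.
\begin{itemize}
\item If $\gamma\ge 0$, I use $\dist{y,E}^{-\gamma}\ge d_\kappa^{-\gamma}$ on $E_{d_\kappa}$ together with $B(x_E,d_\kappa)\subset E_{d_\kappa}$. This gives $A\ge \omega_n d_\kappa^{n-\gamma}$.
\item If $\gamma<0$, I restrict the integral to a ball on which $\dist{\cdot,E}\ge d_\kappa/2$. The function $\dist{\cdot,E}$ is continuous, equals $0$ at $x_E$, and is unbounded along any ray from $x_E$. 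So some point $z$ has $\dist{z,E}=3d_\kappa/4$. Then $B(z,d_\kappa/4)\subset E_{d_\kappa}\setminus E_{d_\kappa/2}$, which yields $A\ge \omega_n (d_\kappa/4)^n (d_\kappa/2)^{-\gamma}$.
\end{itemize}
In both cases I would set $K:=A_{\min}/(\omega_n c_\kappa^n)$, where $A_{\min}$ is the corresponding explicit lower bound. This depends only on $\gamma$, $\kappa$, $\diam{E}$ and $n$, as required.
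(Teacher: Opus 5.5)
Your proof is correct and follows the paper's argument: Lebesgue differentiation on the open set $E_{d_\kappa}$, and for $\dist{x,E}\ge d_\kappa$ the inclusion of $E_{d_\kappa}$ in a ball about $x$ of radius comparable to $\dist{x,E}$ (the paper takes radius $(2+1/\kappa)\,\dist{x,E}$), which gives $K$ proportional to $\int_{E_{d_\kappa}}\dist{z,E}^{-\gamma}\,dz$. Your explicit lower bound for that integral is not in the paper: there $K$ keeps this $E$-dependent factor, which is exactly $\|g_\kappa\|_{L^1}$ and cancels in the proof of Theorem~\ref{thm:Hardy:ineq:p:q:WP:2}, so your extra step only makes the stated dependence of $K$ on $\gamma,\kappa,\diam{E},n$ literal; one small slip is that in Step 1 no $s<n$ can be chosen when $\gamma<0$ and $\overline{\rm{dim}_M}(E)=n$, but that case needs neither $s$ nor $|\overline{E}|=0$, since $w$ is then bounded and defined everywhere.
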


\begin{proof} Let us first notice that the condition $\gamma < n - \overline{{\rm{dim}}}_M(E)$ guarantees
\begin{equation}\label{dist:gamma:R:L1}
\int_{E_R} \dist{z, E}^{-\gamma} dz < \infty
\end{equation}
for every $R > 0$, see for instance \cite[Lemma~1]{Zu}. Now, for a.e. $x$ with $\dist{x, E} < d_{\kappa}$, that is, if a.e. $x \in E_{d_{\kappa}}$, we have  $\mathcal{M}(\dist{\cdot, E}^{-\gamma} \chi_{E_{d_{\kappa}}}) (x) \geq \dist{x, E}^{-\gamma}$. Let us then consider the case $x \notin E_{d_{\kappa}}$, that is, $d_{\kappa} \leq \dist{x, E}$. In this case we have
\begin{equation}\label{E:Delta:B:3:dxE}
E_{d_{\kappa}} \subset B(x, (2 + 1/\kappa) \, \dist{x, E}),
\end{equation}
since, given $y \in E_{d_{\kappa}}$, for $\eps > 0$ there exist $x_0, x_1 \in E$ with $|x-x_0| <  \dist{x, E} + \eps$ and $|y-x_1| < \dist{y, E} + \eps < d_{\kappa} + \eps$ and then
\begin{align*}
|y - x| & \leq |y - x_1| + |x_1 - x_0| + |x_0 -x| \leq d_{\kappa} +\diam{E} + \dist{x, E} + 2 \eps\\
& = d_{\kappa} + \frac{1}{\kappa} d_{\kappa} + \dist{x, E} + 2 \eps \leq (2 + 1/\kappa) \dist{x, E} + 2 \eps
\end{align*}
and \eqref{E:Delta:B:3:dxE} follows by letting $\eps \to 0$ (and by recalling that $E_{d_{\kappa}}$ is an open set). Next, define
\begin{equation}\label{g:kappa:L1:E}
C(\gamma, d_{\kappa}):= \int_{E_{d_{\kappa}}} \dist{z, E}^{-\gamma} dz < \infty
\end{equation}
so that by setting $B_x:= B(x, (2 + 1/\kappa) \, \dist{x, E})$ we get
$$
\frac{1}{|B_x|} \int_{B_x} \dist{z, E}^{-\gamma} \chi_{E_{d_\kappa}}(z) dz = \frac{1}{|B_x|} \int_{E_{d_\kappa}} \dist{z, E}^{-\gamma} dz  = \frac{C(\gamma, d_{\kappa})}{|B_x|}
$$
and then 
$$
\mathcal{M}(\dist{\cdot, E}^{-\gamma} \chi_{E_{d_\kappa}}) (x) \geq C(\gamma, d_{\kappa}) \omega_n^{-1} ((2 + 1/\kappa) \dist{x, E})^{-n}
$$ 
and \eqref{M:d:gamma:dk} is proved with $K:= (2 + 1/\kappa)^{-n}  \omega_n^{-1} C(\gamma, d_{\kappa})$. \end{proof}

\emph{Proof of Theorem \ref{thm:Hardy:ineq:p:q:WP:2}.} The proof will be a combination of Corollary~\ref{coro:Hardy:ineq:p:q:WP} and Lemma \ref{lemma:M:d:gamma:dk}. Fix $0 < \gamma < n - \overline{{\rm{dim}}}_M(E)$ and for $\kappa > 0$ let us use Corollary \ref{coro:Hardy:ineq:p:q:WP} with the function 
$$
g_\kappa:= \dist{\cdot, E}^{-\gamma} \chi_{E_{d_{\kappa}}}
$$
(which belongs to $L^1(\rn)$ due to \eqref{dist:gamma:R:L1}) where $d_\kappa:= \kappa \, \diam{E}$, so that by Lemma \ref{lemma:M:d:gamma:dk}  we have
$$
\mathcal{M}(g_\kappa)(x) \geq K \, \dist{x, E}^{-n}, 
$$
for every $x \in \rn$ with  $\dist{x, E} \geq d_{\kappa}$ where 
$$
K:= (2 + 1/\kappa)^{-n}  \omega_n^{-1} C(\gamma, d_{\kappa}) > \kappa^n \omega_n^{-1} C(\gamma, d_{\kappa})
$$
with $C(\gamma, d_{\kappa})$ being precisely $\norm{g_\kappa}{L^1(\rn)}$ by \eqref{g:kappa:L1:E}. Therefore, we get
$$
\mathcal{M}(g_\kappa)(x)^{q \Theta_{p,q}(1)/n} \geq \kappa^{q \Theta_{p,q}(1)} \omega_n^{-1} C(\gamma, d_{\kappa})^{q \Theta_{p,q}(1)/n} \, \dist{x, E}^{-q \Theta_{p,q}(1)} 
$$
for every $x \in \rn$ with  $\dist{x, E} \geq \kappa \diam{E}$ and the result follows from Corollary \ref{coro:Hardy:ineq:p:q:WP} as the finite quantity $\norm{g}{L^1(\rn)}^{\Theta_{p,q}(1)/n}$ appearing on the right-hand side of the inequality cancels out with the one on the left-hand side.\qed

\end{document}